\numberwithin{equation}{section}
\theoremstyle{plain} \newtheorem{theorem}{Theorem}[section]
\theoremstyle{plain} \newtheorem{proposition}[theorem]{Proposition}
\theoremstyle{plain} \newtheorem{lemma}[theorem]{Lemma}
\theoremstyle{plain} \newtheorem{corollary}[theorem]{Corollary}
\theoremstyle{definition} 
\theoremstyle{definition} 
\theoremstyle{remark} \newtheorem{remark}[theorem]{Remark}
\theoremstyle{remark} \newtheorem{example}[theorem]{Example}
\newcommand{\1}{\mathbbm{1}}
\newcommand{\GCD}{{\rm GCD}\,}
\newcommand{\LCM}{{\rm LCM}\,}
\newcommand{\Vol}{{\bf Vol}\,}
\newcommand{\E}{\mathbb{E}}
\newcommand{\R}{\mathbb R}
\newcommand{\N}{\mathbb N}
\newcommand{\dodn}{\overset{{\rm d}}{\underset{n\to\infty}\longrightarrow}}
\renewcommand{\epsilon}{\varepsilon}
\renewcommand{\geq}{\geqslant}
\renewcommand{\leq}{\leqslant}
\newlength{\querylen}
\begin{document}
\title{Asymptotics of arithmetic functions of GCD and LCM of random integers in hyperbolic regions}

\author{Alexander Iksanov}
\address{Alexander Iksanov, Faculty of Computer Science and Cybernetics, Taras Shev\-chen\-ko National University of Kyiv, 01601 Kyiv, Ukraine}
\email{iksan@univ.kiev.ua}

\author{Alexander Marynych}
\address{Alexander Marynych, Faculty of Computer Science and Cybernetics, Taras Shev\-chen\-ko National University of Kyiv, 01601 Kyiv, Ukraine}
\email{marynych@unicyb.kiev.ua}

\author{Kilian Raschel}
\address{Kilian Raschel, Universit\'e d'Angers, CNRS, Laboratoire Angevin de Recherche en Math\'e-matiques, 49000 Angers, France}\email{raschel@math.cnrs.fr}
\thanks{This project has received funding from the European Research Council (ERC) under the European Union's Horizon 2020 research and innovation programme under the Grant Agreement No.\ 759702.}

\date{\today}

\begin{abstract}
We prove limit theorems for the greatest common divisor and the least common multiple of random integers. While the case of integers uniformly distributed on a hypercube with growing size is classical, we look at the uniform distribution on sublevel sets of multivariate symmetric polynomials, which we call hyperbolic regions. Along the way of deriving our main results, we obtain some asymptotic estimates for the number of integer points in these hyperbolic domains, when their size goes to infinity.
\end{abstract}

\keywords{Arithmetic functions, greatest common divisor, hyperbolic sums, least common multiple}

\maketitle

\section{Introduction}

Let $f:\N\to \mathbb{C}$ be an arithmetic function, with $\mathbb N$ denoting $\{1,2,\ldots\}$. The motivation for the present paper comes from the recent study of hyperbolic sums
\begin{equation}\label{eq:f_dim_two_def}
   f_{G}(n):=\sum_{ij\leq n}f(\GCD(i,j))\quad\text{and}\quad f_{L}(n):=\sum_{ij\leq n}f(\LCM(i,j))
\end{equation}
carried out in \cite{Heyman+Toth:2021}, where the authors derived asymptotics of $f_{G}(n)$ and $f_{L}(n)$, as $n\to\infty$, for certain classes of arithmetic functions $f$. For example, Theorem~2.2 in \cite{Heyman+Toth:2021} yields the following asymptotics
\begin{equation}\label{eq:HT2021_example}
\lim_{n\to\infty}\frac{f_{G}(n)}{n\log n}=\frac{1}{\zeta(2)}\sum_{k=1}^{\infty}\frac{f(k)}{k^2},
\end{equation}
provided that $f(n)=o(n^{\beta}\log^{\delta}n)$, as $n\to\infty$, for some $\beta<1$, $\delta\in\mathbb{R}$ and with $\zeta$ being the Riemann zeta-function.

To set up the scene,
recast \eqref{eq:f_dim_two_def} and \eqref{eq:HT2021_example} in the probabilistic language as follows. Assume that on a certain probability space $(\Omega,\mathcal{F},\mathbb{P})$, there is a sequence of random vectors $\bigl((V_1^{(n)},V_2^{(n)})\bigr)_{n\in\N}$ such that, for every fixed $n$, $(V_1^{(n)},V_2^{(n)})$ has a
uniform distribution on the {\it finite} set
\begin{equation*}
   H_{2,2}(n):=\{(i_1,i_2)\in\N^2: i_1 i_2\leq n\}
\end{equation*}
(the choice of notation $H_{2,2}$ will be explained below, see \eqref{eq:def_hyperbolic_regions}). This means that, 
for all $(i_1,i_2)\in H_{2,2}(n)$,
\begin{equation*}
   \mathbb{P}\{(V_1^{(n)},V_2^{(n)})=(i_1,i_2)\}=\frac{1}{\vert H_{2,2}(n)\vert}.
\end{equation*}
Then
\begin{equation}
\label{eq:f_def_introduction}
   f_{G}(n)=\vert H_{2,2}(n)\vert \cdot\E f(\GCD(V_1^{(n)},V_2^{(n)}))\quad\text{and}\quad f_{L}(n)=\vert H_{2,2}(n)\vert \cdot\E f(\LCM(V_1^{(n)},V_2^{(n)})).
\end{equation}
Taking into account the asymptotics
\begin{equation*}
   \vert H_{2,2}(n)\vert =\sum_{i_1=1}^{n}\sum_{i_2=1}^{\lfloor n/i_1 \rfloor } 1=\sum_{i_1=1}^{n}\lfloor n/i_1 \rfloor ~\sim~ n\log n,\quad n\to\infty,
\end{equation*}
where $\lfloor x\rfloor$ denotes the integer part of $x\in\mathbb{R}$ and the notation $a_n\sim b_n$ means that $\lim_{n\to\infty}(a_n/b_n)=1$, we conclude that \eqref{eq:HT2021_example} is equivalent to
\begin{equation}\label{eq:HT2021_example_distr}
\lim_{n\to\infty}\E f(\GCD(V_1^{(n)},V_2^{(n)}))=\frac{1}{\zeta(2)}\sum_{k=1}^{\infty}\frac{f(k)}{k^2}.
\end{equation}
Remarkably, the quantity on the right-hand side coincides with $\E f(U^{(2,\infty)})$, where by Theorem~1 in \cite{Diaconis+Erdos:2004}, the distribution of $U^{(2,\infty)}$ is the distributional limit of
$\GCD(Z_1^{(n)},Z_2^{(n)})$ as $n\to\infty$, the pair $(Z_1^{(n)},Z_2^{(n)})$ being  uniformly distributed in the square $\{1,2,\ldots,n\}^2$. Since \eqref{eq:HT2021_example_distr} holds for all bounded arithmetic functions, it actually tells us that there is the convergence in distribution
\begin{equation*}
   \lim_{n\to\infty}\mathbb{P}\{\GCD(V_1^{(n)},V_2^{(n)})=m\}=\mathbb{P}\{U^{(2,\infty)}=m\},\quad m\in\mathbb{N}.
\end{equation*}
Therefore, $\GCD(V_1^{(n)},V_2^{(n)})$ for large $n$ behaves as 
the GCD of two independent integers picked uniformly at random from $\{1,2,\ldots,n\}$.

We shall show in the present paper that it is not a coincidence but
rather a simple instance of a much deeper and general phenomenon. This observation will allow us to extend some results in \cite{Heyman+Toth:2021} to an arbitrary dimension and cover more general hyperbolic regions defined by the standard symmetric polynomials.

\subsection*{Acknowledgments}
We thank the anonymous referee for several useful comments and suggestions.

\section{Hyperbolic regions and hyperbolic sums}

Fix $r\in\N$ and $1\leq\ell\leq r$, and let $P_\ell(x_1,x_2,\ldots,x_r)$ be the $\ell$-th standard symmetric polynomial in $r$ variables, that is,
\begin{equation*}
   P_\ell(x_1,x_2,\ldots,x_r):=\sum_{1\leq i_1<i_2<\cdots<i_\ell\leq r} x_{i_1}x_{i_2}\cdots x_{i_\ell}.
\end{equation*}
In particular,
\begin{equation*}
   P_1(x_1,x_2,\ldots,x_r)=x_1+x_2+\cdots+x_r\quad\text{and}\quad P_r(x_1,x_2,\ldots,x_r)=x_1x_2\cdots  x_r.
\end{equation*}
Now we introduce `discrete' hyperbolic regions in $\N^r$ given, for $n\geq \binom{r}{\ell}$, by
\begin{equation}\label{eq:def_hyperbolic_regions}
   H_{\ell,r}(n):=\{(i_1,\ldots,i_r)\in\N^r: P_\ell(i_1,i_2,\ldots,i_r)\leq n\}.
\end{equation}
Observe that the condition $n\geq \binom{r}{\ell}$ ensures $H_{\ell,r}(n)\neq \varnothing$. Moreover, for $r=\ell=2$, \eqref{eq:def_hyperbolic_regions} is consistent with the definition of $H_{2,2}(n)$ in the introduction. In what follows, we fix $r\in\{2,3,\ldots\}$ and $\ell\in\{1,2,\ldots,r\}$. Let $(V_1^{(n)},V_2^{(n)},\ldots,V_r^{(n)})$ be a random vector uniformly distributed in $H_{\ell,r}(n)$, that is,
\begin{equation*}
   \mathbb{P}\{(V_1^{(n)},V_2^{(n)},\ldots,V_r^{(n)})=(i_1,i_2,\ldots,i_r)\}=\frac{1}{\vert H_{\ell,r}(n)\vert },\quad (i_1,i_2,\ldots,i_r)\in H_{\ell,r}(n),\quad n\geq \binom{r}{\ell}.
\end{equation*}
We shall also use the following `continuous' counterparts of the discrete regions $H_{\ell,r}(n)$:
\begin{equation}
\label{eq:def_continuous_hyperbolic}
   \mathcal{H}_{\ell,r}(c):=\{(x_1,x_2,\ldots,x_r)\in \mathbb{R}_{\geq 0}^r: P_\ell(x_1,x_2,\ldots,x_r)\leq c\},\quad 1\leq \ell\leq r,\quad c>0,
\end{equation}
where $\mathbb{R}_{\geq 0}:=[0,\infty)$. See Figure~\ref{fig:hyperbolic_regions} for a few illustrations. Note that $\mathcal{H}_{\ell,r}(c)=c^{1/\ell}\mathcal{H}_{\ell,r}(1)$, by the homogeneity property of $P_\ell$. Let $\Vol$ denote the $r$-dimensional Lebesgue measure on $\mathbb{R}^r$. It is clear that $\Vol(\mathcal{H}_{1,r}(1))=1/r!<\infty$ and $\Vol(\mathcal{H}_{r,r}(1))=\infty$.
It will be shown in
Lemma~\ref{lem:finite_volumes} below that the volumes of all intermediate regions are finite. Since these volumes will play an important
role in what follows, 
we introduce the following notation:
\begin{equation*}
   \mathcal{V}_{\ell,r}:=\Vol(\mathcal{H}_{\ell,r}(1))=\underbrace{\int_0^\infty\cdots\int_0^\infty}_{r\text{ times}}\1_{\{P_{\ell}(y_1,y_2,\ldots,y_r)\leq 1\}}{\rm d}y_1\cdots{\rm d}y_r,\quad 1\leq \ell<r.
\end{equation*}
We do not know whether $\mathcal{V}_{\ell,r}$ admits a closed-form expression, for $1<\ell<r$.

\begin{figure}
\begin{center}
\includegraphics[width=4cm]{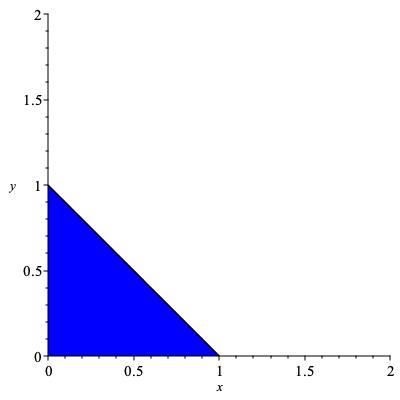}
\includegraphics[width=4cm]{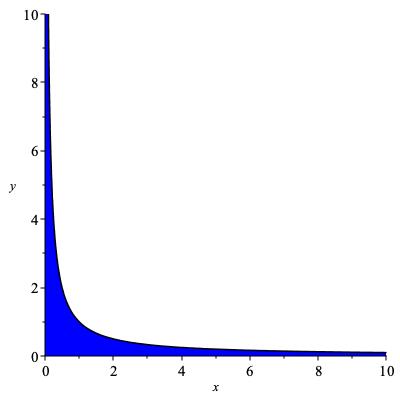}\\
\includegraphics[width=4cm]{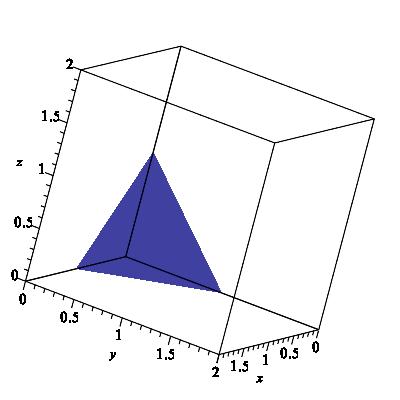}
\includegraphics[width=4cm]{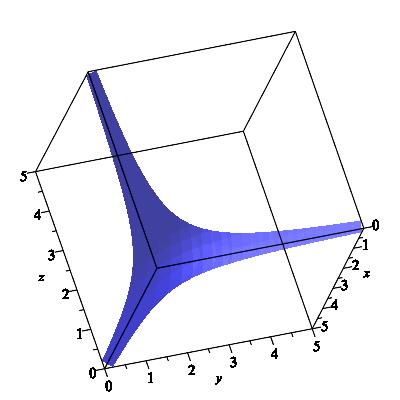}
\includegraphics[width=4cm]{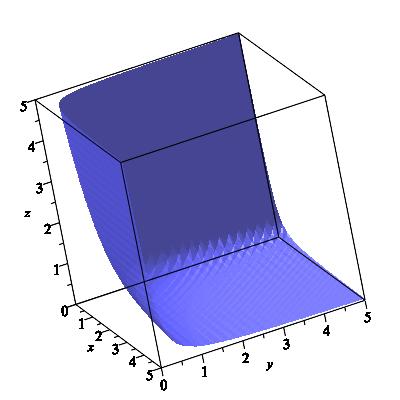}
\end{center}
\caption{Hyperbolic regions defined by \eqref{eq:def_continuous_hyperbolic} with $c=1$. The first row: $\mathcal H_{1,2}(1)$ and $\mathcal H_{2,2}(1)$. The second row: $\mathcal H_{1,3}(1)$, $\mathcal H_{2,3}(1)$ and $\mathcal H_{3,3}(1)$. The adjective `hyperbolic' stems from the fact that, for $r\geq 2$ and $1<\ell\leq r$, the set $\{(x_1,x_2)\in \mathbb{R}^2_{\geq 0}:P_{\ell}(x_1,x_2,c_3,\ldots,c_r)=c\}$ defines either a hyperbola or an empty set for all $c_3,\ldots,c_r > 0$ and $c>0$. This term is not quite appropriate in the case $\ell=1$, in which $\mathcal{H}_{1,r}(c)$ is an $r$-dimensional polytope.}
\label{fig:hyperbolic_regions}
\end{figure}

For an arithmetic function $f:\mathbb{N}\to \mathbb{C}$ and $n\geq \binom{r}{\ell}$, consider the random variables
\begin{equation}\label{eq:f_def}
f_{\ell,r,G}(n):=f(\GCD(V_1^{(n)},V_2^{(n)},\ldots,V_r^{(n)}))~\text{and}~f_{\ell,r,L}(n):=f(\LCM(V_1^{(n)},V_2^{(n)},\ldots,V_r^{(n)})).
\end{equation}
The following equalities extend  formula \eqref{eq:f_def_introduction}:
\begin{align*}
\E f_{\ell,r,G}(n)&=\frac{1}{\vert H_{\ell,r}(n)\vert }\sum_{(i_1,\ldots,i_r)\in H_{\ell,r}(n)}f(\GCD(i_1,i_2,\ldots,i_r)),\\
\E f_{\ell,r,L}(n)&=\frac{1}{\vert H_{\ell,r}(n)\vert }\sum_{(i_1,\ldots,i_r)\in H_{\ell,r}(n)}f(\LCM(i_1,i_2,\ldots,i_r)).
\end{align*}
Thus, deriving the asymptotics of the hyperbolic sums $\sum_{(i_1,\ldots,i_r)\in H_{\ell,r}(n)}f(\GCD(i_1,i_2,\ldots,i_r))$ and $\sum_{(i_1,\ldots,i_r)\in H_{\ell,r}(n)}f(\LCM(i_1,i_2,\ldots,i_r))$ is equivalent to finding the asymptotics of the counting function $\vert H_{\ell,r}(n)\vert $ and 
the expectations $\E f_{\ell,r,G}(n)$ and $\E f_{\ell,r,L}(n)$, respectively. The latter will be obtained
for various functions $f$ from the corresponding distributional limit theorems for
\begin{equation*}
   \GCD(V_1^{(n)},V_2^{(n)},\ldots,V_r^{(n)})\quad \text{and}\quad \LCM(V_1^{(n)},V_2^{(n)},\ldots,V_r^{(n)}).
\end{equation*}

\section{Statement of the main results}
\subsection{First properties of the uniform distribution on \texorpdfstring{$H_{\ell,r}(n)$}{Hrl(n)}}

We start with some basic asymptotic properties of the distribution of $(V_1^{(n)},V_2^{(n)},\ldots,V_r^{(n)})$, which, we recall, is the uniform distribution on the set $H_{\ell,r}(n)$ defined in \eqref{eq:def_hyperbolic_regions}.

\begin{proposition}\label{prop:joint_limit_l<r}
Assume that $r\geq 2$ and $1\leq \ell<r$. Then, for
$\alpha_1,\ldots,\alpha_r>0$,
\begin{equation*}
   \lim_{n\to\infty}\mathbb{P}\left\{V_1^{(n)}\leq \alpha_1 n^{1/\ell},\ldots,V_r^{(n)}\leq \alpha_r n^{1/\ell}\right\}=\frac{1}{\mathcal{V}_{\ell,r}}\int_0^{\alpha_1}\cdots\int_0^{\alpha_r}\1_{\{P_{\ell}(y_1,y_2,\ldots,y_r)\leq 1\}}{\rm d}y_1\cdots{\rm d}y_r.
\end{equation*}
\end{proposition}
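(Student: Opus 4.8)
The plan is to write the probability on the left-hand side as the ratio of two lattice-point counts and to recognize each of them, after rescaling by $n^{1/\ell}$, as a Riemann sum for the indicator of a region built from $\mathcal{H}_{\ell,r}(1)$. Denote by $N_n(\alpha_1,\ldots,\alpha_r)$ the number of $(i_1,\ldots,i_r)\in\N^r$ with $P_\ell(i_1,\ldots,i_r)\leq n$ and $i_j\leq\alpha_j n^{1/\ell}$ for every $j$; then the left-hand side equals $N_n(\alpha_1,\ldots,\alpha_r)/\vert H_{\ell,r}(n)\vert$. Substituting $y_j=i_j n^{-1/\ell}$ and using the homogeneity relation $P_\ell(i_1,\ldots,i_r)=n\,P_\ell(y_1,\ldots,y_r)$, the constraint $P_\ell\leq n$ becomes $P_\ell(y)\leq 1$, so that the points $y$ range over the lattice $(n^{-1/\ell}\N)^r$ of mesh $\delta_n:=n^{-1/\ell}$, whose cell volume is $\delta_n^{\,r}=n^{-r/\ell}$.

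First I would treat the numerator. For fixed positive $\alpha_1,\ldots,\alpha_r$ the relevant region $\mathcal{H}_{\ell,r}(1)\cap\prod_{j=1}^r[0,\alpha_j]$ is bounded, and its boundary lies in the union of the coordinate hyperplanes, the affine faces $\{y_j=\alpha_j\}$, and the real-algebraic hypersurface $\{P_\ell(y)=1\}$ (the zero set of a nonconstant polynomial), hence has $r$-dimensional Lebesgue measure zero; the region is therefore Jordan measurable. The standard convergence of Riemann sums then gives
\[
n^{-r/\ell}\,N_n(\alpha_1,\ldots,\alpha_r)\;\xrightarrow[n\to\infty]{}\;\Vol\Bigl(\mathcal{H}_{\ell,r}(1)\cap\textstyle\prod_{j=1}^r[0,\alpha_j]\Bigr)=\int_0^{\alpha_1}\!\!\cdots\int_0^{\alpha_r}\1_{\{P_\ell(y)\leq 1\}}\,{\rm d}y_1\cdots{\rm d}y_r.
\]

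The denominator is where the real work lies. Taking all $\alpha_j$ equal to a common truncation level $A$, the same Riemann-sum argument yields $n^{-r/\ell}N_n(A,\ldots,A)\to\Vol(\mathcal{H}_{\ell,r}(1)\cap[0,A]^r)$, and by monotone convergence this increases to $\mathcal{V}_{\ell,r}$ as $A\to\infty$, which is finite by Lemma~\ref{lem:finite_volumes}. It thus remains to show that $n^{-r/\ell}\vert H_{\ell,r}(n)\vert\to\mathcal{V}_{\ell,r}$, i.e.\ to justify interchanging the limits $n\to\infty$ and $A\to\infty$. This requires a tail estimate that is \emph{uniform in $n$}: the number of points of $H_{\ell,r}(n)$ with $\max_j i_j>A n^{1/\ell}$, divided by $n^{r/\ell}$, must be made arbitrarily small by choosing $A$ large. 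I expect this uniform tail bound to be the main obstacle, since for $1<\ell<r$ the region $\mathcal{H}_{\ell,r}(1)$ is unbounded.

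To obtain it, by symmetry and a union bound it suffices to control the points with $i_r>An^{1/\ell}$. Here I would use the splitting $P_\ell(i_1,\ldots,i_r)=i_r\,P_{\ell-1}(i_1,\ldots,i_{r-1})+P_\ell(i_1,\ldots,i_{r-1})$, which forces $P_{\ell-1}(i_1,\ldots,i_{r-1})\leq n/i_r$; summing over $i_r$ the lower-dimensional counts $\vert H_{\ell-1,r-1}(n/i_r)\vert$ and invoking an induction on $r$ (the base case $\ell=1$ being trivial, since $\mathcal{H}_{1,r}(1)$ is a bounded simplex and no truncation is needed) should produce a bound of the form $CA^{-\kappa}n^{r/\ell}$ with $\kappa>0$. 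Dividing by $n^{r/\ell}$ and letting $A\to\infty$ closes the argument, and combining the numerator and denominator limits gives exactly the claimed expression $\mathcal{V}_{\ell,r}^{-1}\int_0^{\alpha_1}\cdots\int_0^{\alpha_r}\1_{\{P_\ell(y)\leq 1\}}\,{\rm d}y_1\cdots{\rm d}y_r$.
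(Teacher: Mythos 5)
Your proof is correct, but it takes a genuinely different route from the paper at the one step that matters. Both arguments start identically: write the probability as a ratio of lattice-point counts and rescale by $n^{1/\ell}$. The paper then finishes in one stroke with Proposition~\ref{prop:dri}: since the indicator $g(y_1,\ldots,y_r)=\1_{\{P_\ell(y_1,\ldots,y_r)\leq 1,\,y_1\leq\alpha_1,\ldots,y_r\leq\alpha_r\}}$ is coordinatewise nonincreasing, the infinite Riemann sum is sandwiched between $\int_{[1/n,\infty)^r}g$ and $\int_{[0,\infty)^r}g$, and dominated convergence gives the limit; the same lemma with all constraints removed (Proposition~\ref{prop:Hrl(n)}) handles the denominator, integrability being supplied by Lemma~\ref{lem:finite_volumes}. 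Monotonicity alone tames the unbounded region: no truncation, no tail estimate, no induction. You instead truncate to $[0,A]^r$, invoke Jordan measurability (correct: the boundary lies in finitely many hyperplanes and the zero set of the polynomial $P_\ell-1$, all Lebesgue-null), and control the tail via the recursion $P_\ell(i_1,\ldots,i_r)=i_rP_{\ell-1}(i_1,\ldots,i_{r-1})+P_\ell(i_1,\ldots,i_{r-1})$ and induction. Your sketch does close: with the uniform bound $\vert H_{\ell-1,r-1}(m)\vert\leq Cm^{(r-1)/(\ell-1)}$ (available from the inductive hypothesis, or from monotonicity as in \eqref{eq:prop1_proof0}), the number of points with $i_r>An^{1/\ell}$ is at most $\sum_{i_r>An^{1/\ell}}C(n/i_r)^{(r-1)/(\ell-1)}=O\bigl(A^{-(r-\ell)/(\ell-1)}n^{r/\ell}\bigr)$, because the exponent $q+(1-q)/\ell$ with $q=(r-1)/(\ell-1)$ equals exactly $r/\ell$; so your $\kappa=(r-\ell)/(\ell-1)>0$ indeed exists. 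As for what each approach buys: the paper's monotone-Riemann-sum lemma is shorter and is reused throughout the paper (it also drives Propositions~\ref{prop:Hrl(n)}, \ref{prop:counting_function_asymp}, \ref{prop:product} and \ref{prop:convergence_to_geometrics}), whereas your argument is more elementary and quantitative: it gives an explicit rate in the truncation parameter $A$ and, carried out in full, re-derives the finiteness of $\mathcal{V}_{\ell,r}$ (the content of Lemma~\ref{lem:finite_volumes}) rather than assuming it, since the base case $\ell=1$ is a bounded simplex and the induction propagates finiteness.
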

Proposition~\ref{prop:joint_limit_l<r}, as well as all subsequent 
results stated in this section, will be proved in Section~\ref{sec:proofs}.

In the case $r=\ell$, the limit relation is of different nature, for 
the volume $\mathcal{V}_{r,r}$ is infinite. In the sequel, we find it more convenient to write distributional limit relations using `$\dodn$' notation. 
Specifically, for fixed $r\in\mathbb{N}$, the notation 
\begin{equation*}
   (X_{n,1},\ldots,X_{n,r}) \dodn (X_{1},\ldots,X_{r})
\end{equation*} 
 means that $\mathbb{P}\{X_{n,1}\leq x_1,\ldots,X_{n,r}\leq x_r\}\to \mathbb{P}\{X_{1}\leq x_1,\ldots,X_{r}\leq x_r\}$, as $n\to\infty$, for each continuity point $(x_1,\ldots,x_r)$
of the distribution function $(y_1,\ldots,y_r)\mapsto \mathbb{P}\{X_1\leq y_1,\ldots,X_r\leq y_r\}$.

Let $Z_1,\ldots,Z_{r-1}$ be independent random variables with continuous uniform distribution on $[0,1]$. Denote by $Z^{(1)}<Z^{(2)}<\ldots<Z^{(r-1)}$ their order statistics. Put $Z^{(r)}:=1$,
\begin{equation*}
   J_1:=Z^{(1)}\quad\text{and}\quad J_k=Z^{(k)}-Z^{(k-1)},\quad k=2,\ldots,r.
\end{equation*}
\begin{proposition}\label{prop:joint_limit_l=r}
Assume that $r=\ell\geq 2$. Then
\begin{equation}
\label{eq:joint_limit_l=r}
   \left(\frac{\log V_1^{(n)}}{\log n},\frac{\log V_2^{(n)}}{\log n},\ldots,\frac{\log V_r^{(n)}}{\log n}\right)\dodn \left(J_1,J_2,\ldots,J_r\right),
\end{equation}
or, equivalently,
\begin{equation}
\label{eq:joint_limit_l=r_alternative}
   \left(\frac{\log V_1^{(n)}}{\log n},\frac{\log (V_1^{(n)}V_2^{(n)})}{\log n},\ldots,\frac{\log (V_1^{(n)}\cdots V_r^{(n)})}{\log n}\right)\dodn \left(Z^{(1)},Z^{(2)},\ldots,Z^{(r)}\right).
\end{equation}
\end{proposition}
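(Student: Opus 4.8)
I would first record that \eqref{eq:joint_limit_l=r} and \eqref{eq:joint_limit_l=r_alternative} are equivalent and prove only the former. Indeed, the linear map $T(a_1,\ldots,a_r):=(a_1,\,a_1+a_2,\,\ldots,\,a_1+\cdots+a_r)$ is a homeomorphism of $\R^r$; it carries the log-vector on the left of \eqref{eq:joint_limit_l=r} to the one on the left of \eqref{eq:joint_limit_l=r_alternative} because $\log(V_1^{(n)}\cdots V_k^{(n)})=\sum_{j\leq k}\log V_j^{(n)}$, and it carries $(J_1,\ldots,J_r)$ to $(Z^{(1)},\ldots,Z^{(r)})$ since $Z^{(k)}=J_1+\cdots+J_k$, so the two statements are equivalent by the continuous mapping theorem. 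For $\ell=r$ we have $P_r(i_1,\ldots,i_r)=i_1\cdots i_r$, hence $H_{r,r}(n)=\{i\in\N^r:i_1\cdots i_r\leq n\}$ and $\vert H_{r,r}(n)\vert$ is the summatory function of the $r$-fold divisor function, for which the classical Dirichlet estimate gives $\vert H_{r,r}(n)\vert\sim n(\log n)^{r-1}/(r-1)!$. I would also use the classical fact that $(J_1,\ldots,J_r)$, the vector of $r$ uniform spacings, has the flat Dirichlet law, i.e.\ the uniform distribution on the simplex $\Delta:=\{u\in\R_{\geq 0}^r:u_1+\cdots+u_r=1\}$.

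\textbf{Reduction to a weighted divisor sum.} Set $X_n:=(\log V_1^{(n)}/\log n,\ldots,\log V_r^{(n)}/\log n)$, which takes values in the compact set $[0,1]^r$. Since the $X_n$ are uniformly bounded, the functions $u\mapsto e^{-\langle\lambda,u\rangle}$ with $\lambda\in\R_{\geq 0}^r$ form a point-separating algebra on $[0,1]^r$, so by Stone--Weierstrass the convergence $X_n\dodn(J_1,\ldots,J_r)$ is equivalent to convergence of Laplace transforms. Thus the plan is to prove, for every $\lambda\in\R_{\geq 0}^r$,
\begin{equation*}
\E\,e^{-\langle\lambda,X_n\rangle}=\frac{1}{\vert H_{r,r}(n)\vert}\sum_{i_1\cdots i_r\leq n}\prod_{j=1}^r i_j^{-\lambda_j/\log n}\longrightarrow\Phi(\lambda):=\E\,e^{-\langle\lambda,(J_1,\ldots,J_r)\rangle}.
\end{equation*}
The target is explicit: evaluating the Laplace transform of the uniform law on $\Delta$ as a divided difference gives, for pairwise distinct $\lambda_j$,
\begin{equation*}
\Phi(\lambda)=(r-1)!\sum_{j=1}^r\frac{e^{-\lambda_j}}{\prod_{k\neq j}(\lambda_k-\lambda_j)},
\end{equation*}
the coincident cases being recovered by continuity. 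So everything reduces to the asymptotics of the generalized divisor sum $S_n(\lambda):=\sum_{i_1\cdots i_r\leq n}\prod_j i_j^{-\lambda_j/\log n}$.

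\textbf{Asymptotics of $S_n(\lambda)$.} Writing $s_j:=\lambda_j/\log n\to 0$, the associated Dirichlet series factorizes, $\sum_{i\in\N^r}\prod_j i_j^{-s_j}(i_1\cdots i_r)^{-w}=\prod_{j=1}^r\zeta(w+s_j)$, so Perron's formula gives $S_n(\lambda)=\frac{1}{2\pi\mathrm{i}}\int_{(c)}\prod_j\zeta(w+s_j)\,\frac{n^w}{w}\,\diff w$. Shifting the contour past the simple poles at $w=1-s_j$ and using $n^{1-s_j}=n\,e^{-\lambda_j}$ together with $\zeta(1+(s_k-s_j))\sim(s_k-s_j)^{-1}=\log n\,(\lambda_k-\lambda_j)^{-1}$ produces the leading term
\begin{equation*}
S_n(\lambda)\sim n(\log n)^{r-1}\sum_{j=1}^r\frac{e^{-\lambda_j}}{\prod_{k\neq j}(\lambda_k-\lambda_j)}=\frac{n(\log n)^{r-1}}{(r-1)!}\,\Phi(\lambda),
\end{equation*}
whereas the pole of $n^w/w$ at $w=0$ and the bounds away from $w=1$ contribute only $O(n^{\theta})$ with some $\theta<1$. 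Dividing by $\vert H_{r,r}(n)\vert\sim n(\log n)^{r-1}/(r-1)!$ yields $\E\,e^{-\langle\lambda,X_n\rangle}\to\Phi(\lambda)$, which proves \eqref{eq:joint_limit_l=r}, and hence \eqref{eq:joint_limit_l=r_alternative} by the first paragraph.

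\textbf{Main obstacle.} The delicate point is the uniformity of the residue computation as $n\to\infty$: the $r$ poles $w=1-s_j$ all coalesce at $w=1$ at rate $1/\log n$, so one must control $\prod_j\zeta(w+s_j)$ near this confluent cluster uniformly in $n$ and verify that the collision produces exactly the factor $(\log n)^{r-1}$ weighted by the divided-difference coefficient, with no cancellation of the leading order (this is guaranteed since $\Phi(\lambda)>0$, being a Laplace transform of a probability measure). An equivalent, more elementary but equally delicate route would compare $S_n(\lambda)$ directly with the integral $(\log n)^r\int_{u\geq 0,\,\sum u_j\leq 1}e^{(\log n)\sum u_j-\langle\lambda,u\rangle}\,\diff u$ in the coordinates $u_j=\log i_j/\log n$ and apply Laplace's method; there the same obstacle reappears as the need to show that the discretization error and the mass away from the boundary face $\{\sum u_j=1\}$, on which all the weight concentrates, are negligible compared with $n(\log n)^{r-1}$.
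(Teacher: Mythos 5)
Your route is genuinely different from the paper's, and its skeleton is sound: the equivalence of \eqref{eq:joint_limit_l=r} and \eqref{eq:joint_limit_l=r_alternative} via the linear homeomorphism, the identification of $(J_1,\ldots,J_r)$ as the flat Dirichlet law on the simplex, the Stone--Weierstrass reduction to Laplace transforms on the compact cube, the divided-difference formula for $\Phi(\lambda)$, and the formal residue calculation all check out (for $r=2$ one can verify $\Phi$ explicitly, and the residues do reproduce $\frac{n(\log n)^{r-1}}{(r-1)!}\Phi(\lambda)$). However, as written the proof has a genuine gap, and it sits exactly at the decisive step, which you yourself defer to the final paragraph as the ``main obstacle'' rather than resolve: the asymptotics $S_n(\lambda)\sim \frac{n(\log n)^{r-1}}{(r-1)!}\Phi(\lambda)$ is asserted by shifting the contour ``past the simple poles at $w=1-s_j$'' and replacing $\zeta(1+(s_k-s_j))$ by $(s_k-s_j)^{-1}$, but since $s_j=\lambda_j/\log n\to 0$, the $r$ poles coalesce at $w=1$ and each individual residue blows up like $(\log n)^{r-1}$; the claimed leading term arises only after cancellation among these divergent residues, so a term-by-term residue computation with pointwise approximations of $\zeta$ is not a legitimate argument. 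What is missing is the mechanism that makes this uniform: one should enclose the whole cluster of poles in a single contour, rescale $w=1+z/\log n$, use the expansion $\zeta(1+u)=u^{-1}+O(1)$ uniformly for small $u$, and show that on a fixed circle $|z|=A>\max_j\lambda_j$ the integrand converges to $n(\log n)^{r-1}\frac{e^{z}}{\prod_j(z+\lambda_j)}$ (up to the factor $1/w\to 1$), whence the cluster contributes $\frac{n(\log n)^{r-1}}{2\pi \mathrm{i}}\oint_{|z|=A}\frac{e^{z}\,\diff z}{\prod_j(z+\lambda_j)}$; on top of this one needs the truncated-Perron error and the bounds for $\prod_j\zeta(w+s_j)$ on the shifted vertical line to hold uniformly in the shifts $s_j=O(1/\log n)$, and a separate treatment (or a Lipschitz-in-$\lambda$ density argument) for coincident $\lambda_j$, where the poles of the product are no longer simple. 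None of this is carried out, so the proof is incomplete at its hardest point.

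It is worth contrasting this with the paper's argument, which avoids analytic machinery altogether: the paper proves \eqref{eq:joint_limit_l=r_alternative} by computing, for disjoint windows $(n^{\beta_k},n^{\beta_k+h_k}]$, the probability that each partial product $V_1^{(n)}\cdots V_k^{(n)}$ lands in its window; this reduces, after summing out the last coordinate, to an induction on $r$ driven by the single elementary estimate $\sum_{i}\1_{\{i\in[xn^{a},xn^{a+h}]\}}/i=h\log n+O(1)$, uniformly as $xn^a\to\infty$, together with Proposition~\ref{prop:Hrr(n)} for the normalization and the consequence of \eqref{eq:product_conv1} that $V_1^{(n)}\cdots V_r^{(n)}\geq n^{\beta}$ with probability tending to one for any $\beta<1$. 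If you want to keep your Laplace-transform strategy, you must supply the uniform confluent-pole analysis sketched above (essentially a Selberg--Delange-type argument); otherwise the elementary counting induction is both shorter and self-contained.
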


The next result deals with limit theorems for the product $V_{1}^{(n)}V_{2}^{(n)}\cdots V_{r}^{(n)}$.
\begin{proposition}\label{prop:product}
Assume that $r=\ell\geq 2$. Then
\begin{equation}\label{eq:product_conv1}
   \frac{V_{1}^{(n)}V_{2}^{(n)}\cdots V_{r}^{(n)}}{n}\dodn U_{r,r},
\end{equation}
where $U_{r,r}$ has a continuous uniform distribution on $[0,1]$.

Assume that $1\leq \ell<r$. Then
\begin{equation}\label{eq:product_conv2}
\frac{V_{1}^{(n)}V_{2}^{(n)}\cdots  V_{r}
^{(n)}}{n^{r/\ell}}\dodn U_{\ell,r},
\end{equation}
where $U_{\ell,r}$ has the distribution function
\begin{equation}\label{eq:product_distribution}
\mathbb{P}\{U_{\ell,r}\leq x\}=\frac{1}{\mathcal{V}_{\ell,r}}\int_0^\infty\cdots\int_0^\infty\1_{\{P_{\ell}(y_1,y_2,\ldots,y_r)\leq 1,\;y_1y_2\cdots y_r\leq x\}}{\rm d}y_1\cdots{\rm d}y_r,\quad x\in \left[0,\,x^{\ast}_{\ell,r}\right],
\end{equation}
and $x^{\ast}_{\ell,r}:=\binom{r}{\ell}^{-r/\ell}$.

In both cases, for
$n\geq \binom{r}{\ell}$ (with $\ell=r$ in the first case),
\begin{equation}\label{eq:product_bounded}
\mathbb{P}\left\{0\leq \frac{V_{1}^{(n)}V_{2}^{(n)}\cdots V_{r}
^{(n)}}{n^{r/\ell}}\leq 1\right\}=1.
\end{equation}
In particular, all power moments of positive orders in relations \eqref{eq:product_conv1} and \eqref{eq:product_conv2} converge to the corresponding moments of the limit random variables.
\end{proposition}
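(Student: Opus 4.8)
The plan is to handle the three assertions separately: the two distributional limits follow from the joint limit theorems already established, while the uniform bound together with the moment convergence will come from Maclaurin's inequality and boundedness. Throughout I write $\Pi_n:=V_1^{(n)}V_2^{(n)}\cdots V_r^{(n)}$.

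For relation \eqref{eq:product_conv2}, with $1\le\ell<r$, I would start from Proposition~\ref{prop:joint_limit_l<r}. Its limiting distribution function is that of a random vector $(W_1,\ldots,W_r)$ with the absolutely continuous law $\mathcal V_{\ell,r}^{-1}\1_{\{P_\ell(y)\le 1\}}\,{\rm d}y$ on $\R_{\ge 0}^r$; since this limit is continuous, convergence of the joint distribution functions at all positive $(\alpha_1,\ldots,\alpha_r)$ (the only non-trivial points, as each $V_i^{(n)}\ge 1$) upgrades to weak convergence of the rescaled vectors $(V_1^{(n)}/n^{1/\ell},\ldots,V_r^{(n)}/n^{1/\ell})$ to $(W_1,\ldots,W_r)$. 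Applying the continuous mapping theorem to the product map $(y_1,\ldots,y_r)\mapsto y_1\cdots y_r$, which is continuous on $\R_{\ge 0}^r$, gives $\Pi_n/n^{r/\ell}\dodn W_1\cdots W_r=:U_{\ell,r}$; writing the distribution function of $W_1\cdots W_r$ as the integral of the density over $\{P_\ell(y)\le 1,\ y_1\cdots y_r\le x\}$ reproduces \eqref{eq:product_distribution}. For the endpoint $x^*_{\ell,r}=\binom{r}{\ell}^{-r/\ell}$ I would invoke Maclaurin's inequality $\bigl(P_\ell(y)/\binom{r}{\ell}\bigr)^{1/\ell}\ge(y_1\cdots y_r)^{1/r}$, valid for $y\in\R_{\ge 0}^r$: on $\{P_\ell(y)\le 1\}$ it forces $y_1\cdots y_r\le\binom{r}{\ell}^{-r/\ell}$, with equality when all coordinates coincide, so the law of $U_{\ell,r}$ is supported on $[0,x^*_{\ell,r}]$ (the set $\{P_\ell(y)\le 1,\ y_1\cdots y_r>x\}$ has positive volume for every $x<x^*_{\ell,r}$), consistently with the range of $x$ in \eqref{eq:product_distribution}.

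For relation \eqref{eq:product_conv1}, with $\ell=r$, the product is not a bounded continuous functional of the rescaled vector of Proposition~\ref{prop:joint_limit_l=r} (that result only yields $\log\Pi_n/\log n\to 1$), so I would argue directly with distribution functions: for $x\in(0,1)$ one has $\mathbb{P}\{\Pi_n/n\le x\}=|H_{r,r}(\lfloor xn\rfloor)|/|H_{r,r}(n)|$. Invoking the classical Piltz asymptotics $|H_{r,r}(n)|=\sum_{k\le n}d_r(k)\sim n(\log n)^{r-1}/(r-1)!$ — equivalently, the fact that $|H_{r,r}(n)|$ is regularly varying of index $1$ — this ratio tends to $x$, the uniform distribution function on $[0,1]$; here $d_r(k)$ denotes the number of ordered factorizations of $k$ into $r$ positive factors. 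This counting estimate is the sole ingredient not supplied by the preceding propositions, and I expect it to be the main point to pin down, although it is classical; if a counting lemma for $|H_{\ell,r}(n)|$ is recorded earlier, I would simply cite it.

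Finally, for \eqref{eq:product_bounded} I would apply Maclaurin's inequality to the integer point $(V_1^{(n)},\ldots,V_r^{(n)})\in H_{\ell,r}(n)$: since $P_\ell(V^{(n)})\le n$ and $\binom{r}{\ell}\ge 1$, it yields $\Pi_n\le\bigl(P_\ell(V^{(n)})/\binom{r}{\ell}\bigr)^{r/\ell}\le n^{r/\ell}$ (when $\ell=r$ this is simply $\Pi_n=P_r(V^{(n)})\le n$), hence $0\le\Pi_n/n^{r/\ell}\le 1$ surely. The convergence of all positive power moments is then immediate from bounded convergence: the variables $\Pi_n/n^{r/\ell}$ take values in $[0,1]$, the map $t\mapsto t^p$ is continuous and bounded there, so the weak convergence already established gives $\E(\Pi_n/n^{r/\ell})^p\to\E U_{\ell,r}^p$ (respectively $\E U_{r,r}^p$) for every $p>0$.
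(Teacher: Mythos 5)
Your proof is correct, and it differs from the paper's in how the $\ell<r$ case and the boundedness claim are handled. For \eqref{eq:product_conv2} the paper does not pass through Proposition~\ref{prop:joint_limit_l<r} at all: it writes $\mathbb{P}\{V_1^{(n)}\cdots V_r^{(n)}\leq xn^{r/\ell}\}$ directly as a ratio of lattice-point counts and applies the Riemann-sum lemma (Proposition~\ref{prop:dri}) to the numerator with $g(y)=\1_{\{P_\ell(y)\leq 1,\;y_1\cdots y_r\leq x\}}$, together with Proposition~\ref{prop:Hrl(n)} for the denominator. Your route---weak convergence of the rescaled vector from Proposition~\ref{prop:joint_limit_l<r} (correctly upgraded from CDF convergence, since the limit law is absolutely continuous) followed by the continuous mapping theorem applied to the product map---is softer and reuses prior results rather than re-invoking the Riemann-sum lemma; it is legitimate and non-circular, since the paper's proof of Proposition~\ref{prop:joint_limit_l<r} uses only Proposition~\ref{prop:dri}. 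Your treatment of the $\ell=r$ case is identical to the paper's (ratio $\vert H_{r,r}(\lfloor xn\rfloor)\vert/\vert H_{r,r}(n)\vert\to x$ via Corollary~\ref{corr:Hrr(n)}, which rests on Proposition~\ref{prop:Hrr(n)}, both recorded before the proof), and you rightly avoid Proposition~\ref{prop:joint_limit_l=r}, which would be both insufficient and circular, as the paper proves it \emph{using} \eqref{eq:product_conv1}. For \eqref{eq:product_bounded} and the endpoint $x^{\ast}_{\ell,r}$, the paper uses a self-contained trick---multiplying the inequalities $V_{i_1}^{(n)}\cdots V_{i_\ell}^{(n)}\leq n$ over all $\binom{r}{\ell}$ $\ell$-tuples---whereas you cite Maclaurin's inequality; the content is equivalent, and your Maclaurin argument in fact makes the identification of $x^{\ast}_{\ell,r}=\binom{r}{\ell}^{-r/\ell}$ as the supremum of the support of $U_{\ell,r}$ more rigorous than the paper's brief geometric assertion about intersecting surfaces. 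The moment-convergence step (bounded support plus weak convergence) is the same in both.
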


\begin{example}
The distribution function of $U_{1,2}$ can be explicitly calculated and takes the following form:
\begin{equation*}
   \mathbb{P}\{U_{1,2}\leq x\}=1-\sqrt{1-4x}+2x\log\left(\frac{1+\sqrt{1-4x}}{1-\sqrt{1-4x}}\right),\quad x\in [0,\,1/4],
\end{equation*}
with a density (the derivative)
$x\mapsto 2\log\left(\frac{1+\sqrt{1-4x}}{1-\sqrt{1-4x}}\right)\1_{[0,\,1/4]}(x)$. For other values of $\ell<r$, there seems to be no simple closed form expression for $\mathbb{P}\{U_{\ell,r}\leq x\}$.
\end{example}

\subsection{Arithmetic properties of the uniform distribution on \texorpdfstring{$H_{\ell,r}(n)$}{Hrl(n)}}
Our next result shows that without {\it any} assumptions on the function $f$, the
random variables $f_{\ell,r,G}(n)$ in \eqref{eq:f_def} converge in distribution, as $n\to\infty$.

As a preparation, we introduce a collection 
of random variables, which is of major importance for the subsequent analysis. Let $\mathcal{P}$ denote the set of prime numbers and
$(\mathcal{G}_k(p))_{k\in\mathbb{N},
p\in\mathcal{P}}$ be a collection
of mutually independent random variables with the following geometric distributions
\begin{equation*}
   \mathbb{P}\{\mathcal{G}_k(p)=j\}=\left(1-\frac{1}{p}\right)\frac{1}{p^j},\quad j=0,1,2,\ldots
\end{equation*}
Finally, let $\lambda_p(n)\in\{0,1,2,\ldots\}$ denote the multiplicity of a prime $p$ in the prime decomposition of an integer $n$, that is, 
\begin{equation*}
   n=\prod_{p\in\mathcal{P}}p^{\lambda_p(n)}.
\end{equation*}
\begin{theorem}\label{thm:main1}
Let $f:\N\to \mathbb{C}$ be an arithmetic function. Then
\begin{equation}
\label{thm:main1_claim}
   f_{\ell,r,G}(n)\dodn f\left(\prod_{p\in\mathcal{P}}p^{\min_{k=1,\ldots,r}\mathcal{G}_k(p)}\right).
\end{equation}
\end{theorem}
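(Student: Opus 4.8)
The plan is to reduce the statement to a convergence in distribution of the integer-valued random variable $\GCD(V_1^{(n)},\ldots,V_r^{(n)})$ on $\N$ and then invoke the continuous mapping theorem: since $\N$ is discrete, every arithmetic function $f$ is continuous on it, so it suffices to prove
\[
   \GCD(V_1^{(n)},\ldots,V_r^{(n)})\dodn \prod_{p\in\mathcal P}p^{\min_{k=1,\ldots,r}\mathcal G_k(p)}=:G,
\]
that is, $\mathbb P\{\GCD(V_1^{(n)},\ldots,V_r^{(n)})=m\}\to\mathbb P\{G=m\}$ for every $m\in\N$ (both being genuine probability distributions on $\N$, so no mass escapes to infinity). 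The natural handle on the GCD is divisibility: for $d\in\N$ one has $d\mid\GCD(V_1^{(n)},\ldots,V_r^{(n)})$ if and only if $d\mid V_k^{(n)}$ for all $k$. Writing $V_k=dW_k$ and using the homogeneity $P_\ell(dW_1,\ldots,dW_r)=d^\ell P_\ell(W_1,\ldots,W_r)$ together with the fact that $P_\ell$ takes integer values on $\N^r$, I would identify the number of points of $H_{\ell,r}(n)$ all of whose coordinates are divisible by $d$ with $\vert H_{\ell,r}(\lfloor n/d^\ell\rfloor)\vert$, whence
\[
   \mathbb P\{d\mid\GCD(V_1^{(n)},\ldots,V_r^{(n)})\}=\frac{\vert H_{\ell,r}(\lfloor n/d^\ell\rfloor)\vert}{\vert H_{\ell,r}(n)\vert}.
\]

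Next I would feed in the counting asymptotics for $\vert H_{\ell,r}(n)\vert$ obtained earlier. In the regime $\ell<r$ one has $\vert H_{\ell,r}(n)\vert\sim\mathcal V_{\ell,r}\,n^{r/\ell}$, so the ratio tends to $(d^{-\ell})^{r/\ell}=d^{-r}$; in the regime $\ell=r$ one has $\vert H_{r,r}(n)\vert\sim n(\log n)^{r-1}/(r-1)!$, and since $\log(n/d^r)\sim\log n$ the logarithmic factors cancel in the limit and the ratio again tends to $d^{-r}$. Thus in both cases $\mathbb P\{d\mid\GCD(V_1^{(n)},\ldots,V_r^{(n)})\}\to d^{-r}$ for every fixed $d$. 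I would then recover the point masses by M\"obius inversion over the divisibility poset,
\[
   \mathbb P\{\GCD(V_1^{(n)},\ldots,V_r^{(n)})=m\}=\sum_{t\geq1}\mu(t)\,\mathbb P\{tm\mid\GCD(V_1^{(n)},\ldots,V_r^{(n)})\},
\]
and pass to the limit term by term to get $\lim_{n\to\infty}\mathbb P\{\GCD(V_1^{(n)},\ldots,V_r^{(n)})=m\}=\sum_{t\geq1}\mu(t)(tm)^{-r}=\zeta(r)^{-1}m^{-r}$.

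Finally I would check that this coincides with $\mathbb P\{G=m\}$. For the limit variable, $M_p:=\min_{k}\mathcal G_k(p)$ satisfies $\mathbb P\{M_p\geq a\}=\prod_{k=1}^r\mathbb P\{\mathcal G_k(p)\geq a\}=p^{-ra}$, so $M_p$ is geometric with $\mathbb P\{M_p=a\}=(1-p^{-r})p^{-ra}$; by the independence of the $\mathcal G_k(p)$ across $p$ and the Euler product $\prod_p(1-p^{-r})=\zeta(r)^{-1}$, for $m=\prod_p p^{a_p}$ we obtain $\mathbb P\{G=m\}=\prod_p(1-p^{-r})p^{-ra_p}=\zeta(r)^{-1}m^{-r}$, matching the computed limit exactly. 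This yields $\GCD(V_1^{(n)},\ldots,V_r^{(n)})\dodn G$ and hence the theorem. The main obstacle is the passage to the limit inside the infinite M\"obius sum: it requires a bound $\mathbb P\{tm\mid\GCD(V_1^{(n)},\ldots,V_r^{(n)})\}\leq C\,(tm)^{-r}$ uniform in $n$, which I would extract from a uniform upper estimate $\vert H_{\ell,r}(k)\vert\leq C'\,k^{r/\ell}$ (respectively $\vert H_{r,r}(k)\vert\leq C'\,k(1+\log k)^{r-1}$ when $\ell=r$) valid for all $k\geq1$; granting this, the dominating series $\sum_{t\geq1}(tm)^{-r}$ converges for $r\geq2$ and dominated convergence applies.
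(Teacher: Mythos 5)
Your proof is correct, and it takes a genuinely different route from the paper's. You reduce everything to the divisibility probabilities $\mathbb{P}\{d\mid\GCD(V_1^{(n)},\ldots,V_r^{(n)})\}=\vert H_{\ell,r}(\lfloor n/d^{\ell}\rfloor)\vert/\vert H_{\ell,r}(n)\vert$ (valid since $P_\ell$ is homogeneous of degree $\ell$ and integer-valued on $\N^r$), feed in the counting asymptotics of Propositions~\ref{prop:Hrr(n)} and~\ref{prop:Hrl(n)} to get the limit $d^{-r}$, and recover the point masses by M\"obius inversion; the uniform domination you need for the term-by-term limit is precisely the paper's monotonicity bound \eqref{eq:prop1_proof0} when $\ell<r$, and when $\ell=r$ the elementary bound $\vert H_{r,r}(k)\vert\leq k(1+\log k)^{r-1}$, which follows by induction from \eqref{eq:Hrr(n)_proof1}; finally, the passage from convergence of the GCD to convergence of $f(\GCD)$ is indeed automatic on the discrete space $\N$ (pointwise convergence of masses to a proper law even gives total variation convergence, by Scheff\'e). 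The paper argues differently: it writes $\GCD(V_1^{(n)},\ldots,V_r^{(n)})=\prod_{p\in\mathcal{P}}p^{\min_{k}\lambda_p(V_k^{(n)})}$, proves joint convergence of all prime multiplicities (together with the normalized product) to independent geometric variables (Proposition~\ref{prop:convergence_to_geometrics}), truncates the product at primes $p\leq M$, and kills the tail $p>M$ by Boole's inequality, Potter's bound and Theorem~3.2 in \cite{Billingsley:1999} --- a step whose role in your argument is played by dominated convergence in the M\"obius sum. Your route is more elementary and yields the limit law in closed form, $\mathbb{P}\{U^{(r,\infty)}=m\}=\zeta(r)^{-1}m^{-r}$ (which the paper records only indirectly, via the Mellin transform in the remark following the theorem), and your identification of this law with that of $\prod_{p}p^{\min_{k}\mathcal{G}_k(p)}$ through the Euler product is sound. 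What the paper's heavier machinery buys is reusability: Proposition~\ref{prop:convergence_to_geometrics} and the same truncation scheme are exactly what drive the LCM results (Theorem~\ref{thm:main2}), which are out of reach of a pure divisibility/M\"obius argument, because the LCM depends on the full joint multiplicity structure and not only on the events $\{d\mid V_k^{(n)}\text{ for all }k\}$.
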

\begin{remark}
The distribution of the random
variable 
\begin{equation*}
U^{(r,\infty)}:=\prod_{p\in\mathcal{P}}p^{\min_{k=1,\ldots,r}\mathcal{G}_k(p)}
\end{equation*}
can be characterized as follows. Since the minimum of independent geometric variables has again a geometric distribution with the parameter being the product of the parameters of individual variables, the Mellin transform of $U^{(r,\infty)}$ is given by
\begin{align*}
\E \bigl((U^{(r,\infty)})^{s}\bigr)=\prod_{p\in\mathcal{P}}\E p^{s\min_{k=1,\ldots,r}\mathcal{G}_k(p)}=\prod_{p\in\mathcal{P}}\left(\sum_{j=0}^{\infty}p^{sj}\left(1-\frac{1}{p^r}\right)\frac{1}{p^{rj}}\right)=\frac{\zeta(r-s)}{\zeta(r)},\quad s<r-1.
\end{align*}
We have used Euler's product formula for the last equality.
\end{remark}

Theorem~\ref{thm:main2} below is a limit theorem for the $\LCM$.
\begin{theorem}\label{thm:main2}
The following convergence in distribution holds true:
\begin{align}\label{eq:lcm_conv1}
\frac{\LCM(V_1^{(n)},V_2^{(n)},\ldots,V_r^{(n)})}{V_1^{(n)}V_2^{(n)}\cdots V_r^{(n)}} &\dodn \prod_{p\in\mathcal{P}}
p^{\max_{k=1,\ldots,r}\mathcal{G}_k(p)-\sum_{k=1}^{r}\mathcal{G}_k(p)},\medskip
\\ \label{eq:lcm_conv2}
\frac{\LCM(V_1^{(n)},V_2^{(n)},\ldots,V_r^{(n)})}{n^{r/\ell}} &\dodn U_{\ell,r}\prod_{p\in\mathcal{P}}
p^{\max_{k=1,\ldots,r}\mathcal{G}_k(p)-\sum_{k=1}^{r}\mathcal{G}_k(p)},
\end{align}
where the random variable $U_{\ell,r}$ on the right-hand side of \eqref{eq:lcm_conv2} is independent of the $\mathcal{G}_k(p)$ and has the distribution given by \eqref{eq:product_distribution} if $\ell<r$, and has the uniform distribution on $[0,1]$ if $\ell=r$. Moreover, in both relations \eqref{eq:lcm_conv1} and \eqref{eq:lcm_conv2}, all power moments of positive orders converge to the corresponding moments of the limit random variables.
\end{theorem}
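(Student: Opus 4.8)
The starting point is the elementary prime-factorization identity, valid for all $a_1,\ldots,a_r\in\N$,
\[
\frac{\LCM(a_1,\ldots,a_r)}{a_1\cdots a_r}=\prod_{p\in\mathcal{P}}p^{\max_{1\le k\le r}\lambda_p(a_k)-\sum_{k=1}^{r}\lambda_p(a_k)}=:\Phi(a_1,\ldots,a_r),\qquad 0<\Phi\le 1,
\]
whose $p$-th factor differs from $1$ only when $p$ divides at least two of the $a_k$. Abbreviating $V^{(n)}=(V_1^{(n)},\ldots,V_r^{(n)})$, relation \eqref{eq:lcm_conv1} is exactly the assertion that $\Phi(V^{(n)})$ converges in distribution to $\Phi(\mathcal{G}):=\prod_p p^{\max_k\mathcal{G}_k(p)-\sum_k\mathcal{G}_k(p)}$, while \eqref{eq:lcm_conv2} follows by writing
\[
\frac{\LCM(V_1^{(n)},\ldots,V_r^{(n)})}{n^{r/\ell}}=\Phi(V^{(n)})\cdot\frac{V_1^{(n)}\cdots V_r^{(n)}}{n^{r/\ell}}
\]
and combining \eqref{eq:lcm_conv1} with Proposition~\ref{prop:product}, once the asymptotic independence of the two factors is established.

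The engine of the proof is a joint local limit statement, obtained from the same lattice-point counting in $H_{\ell,r}(n)$ that underlies Theorem~\ref{thm:main1} but now keeping track of all multiplicities rather than only their minimum: for every finite set of primes $S\subset\mathcal{P}$,
\[
\Bigl(\bigl(\lambda_p(V_k^{(n)})\bigr)_{p\in S,\,1\le k\le r},\ \frac{V_1^{(n)}\cdots V_r^{(n)}}{n^{r/\ell}}\Bigr)\dodn\Bigl(\bigl(\mathcal{G}_k(p)\bigr)_{p\in S,\,1\le k\le r},\ U_{\ell,r}\Bigr),
\]
where the multiplicity block and $U_{\ell,r}$ are \emph{independent} in the limit. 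The convergence of the multiplicity block reproduces the mechanism behind Theorem~\ref{thm:main1}; the asymptotic independence from the normalized product reflects the heuristic that divisibility by fixed small primes is a local constraint that does not perturb the macroscopic shape of the uniform law on the growing region $H_{\ell,r}(n)$, and is verified by checking that imposing the finitely many congruence conditions multiplies the relevant point count by the expected asymptotic density factor without altering the limiting distribution of $V_1^{(n)}\cdots V_r^{(n)}/n^{r/\ell}$.

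Next I would truncate. For $y>0$ set $S_y:=\{p\in\mathcal{P}:p\le y\}$ and $\Phi_{S_y}(a):=\prod_{p\le y}p^{\max_k\lambda_p(a_k)-\sum_k\lambda_p(a_k)}$, a fixed measurable function of the finite multiplicity block. The continuous mapping theorem applied to the displayed finite-dimensional convergence gives, for each fixed $y$,
\[
\Bigl(\Phi_{S_y}(V^{(n)}),\ \frac{V_1^{(n)}\cdots V_r^{(n)}}{n^{r/\ell}}\Bigr)\dodn\Bigl(\Phi_{S_y}(\mathcal{G}),\ U_{\ell,r}\Bigr),
\]
with independent coordinates on the right. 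As $y\to\infty$ the limit converges almost surely, since $\max_k\mathcal{G}_k(p)=\sum_k\mathcal{G}_k(p)$ for all but finitely many $p$ by Borel--Cantelli (the probability that at least two of $\mathcal{G}_1(p),\ldots,\mathcal{G}_r(p)$ are positive is $O(p^{-2})$), whence $\Phi_{S_y}(\mathcal{G})\to\Phi(\mathcal{G})$ almost surely.

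The crux, and the main obstacle, is to show that the truncation error is uniformly negligible: for every $\epsilon>0$,
\[
\lim_{y\to\infty}\limsup_{n\to\infty}\mathbb{P}\bigl\{\Phi_{S_y}(V^{(n)})-\Phi(V^{(n)})>\epsilon\bigr\}=0.
\]
Since $0\le\Phi\le\Phi_{S_y}\le 1$ and $\Phi_{S_y}-\Phi\le-\log(\Phi/\Phi_{S_y})=\sum_{p>y}\bigl(\sum_k\lambda_p(V_k^{(n)})-\max_k\lambda_p(V_k^{(n)})\bigr)\log p$, together with the pointwise bound $\sum_k\lambda_p-\max_k\lambda_p\le\sum_{1\le k<k'\le r}\lambda_p(\GCD(V_k^{(n)},V_{k'}^{(n)}))$, Markov's inequality reduces the claim to
\[
\sum_{1\le k<k'\le r}\ \mathbb{E}\sum_{p>y}\lambda_p\bigl(\GCD(V_k^{(n)},V_{k'}^{(n)})\bigr)\log p\ \le\ C\sum_{p>y}\frac{\log p}{p^2}\ \xrightarrow[y\to\infty]{}0
\]
uniformly in large $n$. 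This last bound rests on the lattice-point estimate $\mathbb{P}\{p^j\mid V_k^{(n)},\,p^j\mid V_{k'}^{(n)}\}\le C p^{-2j}$, holding uniformly over all primes $p$ and all $j\ge 1$; the moderate primes are controlled by the asymptotic counting already invoked for the joint limit, while very large prime powers are dispatched by the crude observation that only few points of $H_{\ell,r}(n)$ can be divisible by a large $p^j$ in two distinct coordinates. Granting this, a three-$\epsilon$ argument combining the per-$y$ convergence, the uniform tail bound, and the almost sure convergence of the limit yields $\Phi(V^{(n)})\dodn\Phi(\mathcal{G})$, jointly with $V_1^{(n)}\cdots V_r^{(n)}/n^{r/\ell}\dodn U_{\ell,r}$ and with the two limits independent; this is \eqref{eq:lcm_conv1} and, after multiplying the factors, \eqref{eq:lcm_conv2}. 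Finally, the moment convergence is immediate: the ratio in \eqref{eq:lcm_conv1} lies in $(0,1]$ and, by \eqref{eq:product_bounded}, the left-hand side of \eqref{eq:lcm_conv2} lies in $[0,1]$, so both families are uniformly bounded and convergence in distribution upgrades to convergence of all positive moments by bounded convergence.
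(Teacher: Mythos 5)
Your proposal is correct and follows essentially the same route as the paper's own proof: the same prime-factorization identity, truncation to primes below a fixed level, the joint convergence of the multiplicity block and the normalized product to independent geometrics and $U_{\ell,r}$ (this is exactly the paper's Proposition~\ref{prop:convergence_to_geometrics}), tail control resting on the same lattice-point bounds (inequality \eqref{eq:prop1_proof0} for $\ell<r$, regular variation of $\vert H_{r,r}(n)\vert$ for $\ell=r$), the continuous mapping theorem for \eqref{eq:lcm_conv2}, and moment convergence from uniform boundedness on $[0,1]$. The only implementation difference is in the tail step: you bound $\mathbb{P}\{\Phi_{S_y}(V^{(n)})-\Phi(V^{(n)})>\epsilon\}$ by Markov's inequality applied to $\sum_{p>y}\bigl(\sum_k\lambda_p-\max_k\lambda_p\bigr)\log p$, with sums over prime powers $p^j$, whereas the paper more directly bounds the probability that the tail product differs from $1$ at all, via Boole's inequality and exchangeability, which avoids the $\log p$ weights and the sum over $j$ but ultimately uses the identical counting estimates.
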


Our last result is concerned with the asymptotic behavior of the average $\E f_{\ell,r,L}(n)$. Recall that a real-valued measurable function $f$ defined in a neighbourhood of $+\infty$ is called regularly varying at $\infty$ if there exists $\beta\in\mathbb{R}$ such that, for all $\lambda>0$,
\begin{equation*}
   \lim_{t\to\infty}\frac{f(\lambda t)}{f(t)}=\lambda^{\beta}.
\end{equation*} 
The parameter $\beta$ is called the index of regular variation of $f$ at $\infty$. We refer to \cite{BGT} for a comprehensive information on regularly varying functions.

\begin{corollary}\label{corr:LCM_moments}
Let $f:\R^r_{\geq 0}\to\R$ be a locally bounded function which varies regularly at $\infty$ of index
$\beta>0$. Then,
as $n\to\infty$,
\begin{multline*}
   \E f_{\ell,r,L}(n)=\frac{1}{\vert H_{\ell,r}(n)\vert }\sum_{(i_1,\ldots,i_r)\in H_{\ell,r}(n)}f(\LCM(i_1,i_2,\ldots,i_r))\\
\sim~\E (U_{\ell,r}^{\beta})\E \left(\biggl(\prod_{p\in\mathcal{P}}
p^{\max_{k=1,\ldots,r}\mathcal{G}_k(p)-\sum_{k=1}^{r}\mathcal{G}_k(p)}\biggr)^{\beta}\right)f(n^{r/\ell}).
\end{multline*}
\end{corollary}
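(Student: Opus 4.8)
The plan is to transfer the distributional convergence \eqref{eq:lcm_conv2} into convergence of the normalized average by means of the standard toolkit for regularly varying functions, the crux being to control the regime where the $\LCM$ is atypically small. Write $t_n:=n^{r/\ell}$, $L_n:=\LCM(V_1^{(n)},\ldots,V_r^{(n)})$ and $R_n:=L_n/t_n$. Since the $\LCM$ divides the product $V_1^{(n)}\cdots V_r^{(n)}$, relation \eqref{eq:product_bounded} gives $R_n\in(0,1]$, while \eqref{eq:lcm_conv2} yields $R_n\dodn X:=U_{\ell,r}W$, where $W:=\prod_{p\in\mathcal P}p^{\max_{k}\mathcal G_k(p)-\sum_{k}\mathcal G_k(p)}$ and $U_{\ell,r}$ is independent of $W$. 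Note $X\in[0,1]$, its law is non-atomic (a product of the non-atomic $U_{\ell,r}$ with an independent positive variable), and $\E(X^\beta)=\E(U_{\ell,r}^\beta)\,\E(W^\beta)\in(0,\infty)$ by independence. As $\E f_{\ell,r,L}(n)=\E f(L_n)$ and $f(t_n)\to+\infty$ (a regularly varying function of index $\beta>0$ is eventually positive and divergent), it suffices to prove that $\E f(L_n)/f(t_n)\to\E(X^\beta)$.

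Fix $\delta\in(0,\beta)$ and any $\epsilon\in(0,1)$. By Potter's theorem \cite[Theorem~1.5.6]{BGT} there are $C>0$ and $X_0\geq1$ with $f(y)/f(x)\leq C\max((y/x)^{\beta+\delta},(y/x)^{\beta-\delta})$ for all $x,y\geq X_0$. I would then decompose, for $n$ large enough that $\epsilon t_n\geq X_0$,
\[
   \frac{\E f(L_n)}{f(t_n)}=\E\Bigl[\tfrac{f(L_n)}{f(t_n)}\1_{\{L_n<X_0\}}\Bigr]+\E\Bigl[\tfrac{f(L_n)}{f(t_n)}\1_{\{L_n\geq X_0,\,R_n<\epsilon\}}\Bigr]+\E\Bigl[\tfrac{f(L_n)}{f(t_n)}\1_{\{R_n\geq\epsilon\}}\Bigr]=:A_n+B_n+C_n.
\]
For $A_n$, local boundedness gives $|f(L_n)|\leq M:=\sup_{[0,X_0]}|f|$, so $A_n\leq M/f(t_n)\to0$. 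For $B_n$, both arguments exceed $X_0$ on the relevant event, so Potter's bound with $y/x=R_n<\epsilon<1$ and $\beta-\delta>0$ gives $f(L_n)/f(t_n)\leq C\,R_n^{\beta-\delta}\leq C\,\epsilon^{\beta-\delta}$, whence $B_n\leq C\,\epsilon^{\beta-\delta}$ uniformly in $n$.

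The main term is $C_n$, which I would handle via the uniform convergence theorem \cite[Theorem~1.5.2]{BGT}: $\eta_n:=\sup_{\lambda\in[\epsilon,1]}|f(\lambda t_n)/f(t_n)-\lambda^\beta|\to0$. On $\{R_n\geq\epsilon\}$ this gives $|f(L_n)/f(t_n)-R_n^\beta|\leq\eta_n$, hence $|C_n-\E(R_n^\beta\1_{\{R_n\geq\epsilon\}})|\leq\eta_n\to0$; and since $x\mapsto x^\beta\1_{\{x\geq\epsilon\}}$ is bounded on $[0,1]$ and continuous off $\epsilon$ (a non-atom of $X$), the weak convergence $R_n\dodn X$ gives $\E(R_n^\beta\1_{\{R_n\geq\epsilon\}})\to\E(X^\beta\1_{\{X\geq\epsilon\}})$. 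Combining the three estimates,
\[
   \limsup_{n\to\infty}\Bigl|\frac{\E f(L_n)}{f(t_n)}-\E(X^\beta)\Bigr|\leq C\,\epsilon^{\beta-\delta}+\E\bigl(X^\beta\1_{\{X<\epsilon\}}\bigr),
\]
and letting $\epsilon\downarrow0$ (monotone convergence, using $X^\beta\leq1$) makes the right-hand side vanish, which proves the claimed asymptotics after factoring $\E(X^\beta)=\E(U_{\ell,r}^\beta)\E(W^\beta)$. The one delicate point is exactly the small-$\LCM$ regime $R_n<\epsilon$: regular variation governs $f$ only for large arguments, so one must separate the genuinely small values $L_n<X_0$ (dispatched by local boundedness together with $f(t_n)\to\infty$) from the moderate ones $X_0\leq L_n<\epsilon t_n$ (dispatched by Potter's inequality with the positive exponent $\beta-\delta$), guaranteeing that the whole regime is negligible as $\epsilon\downarrow0$.
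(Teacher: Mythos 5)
Your proof is correct, and it reaches the conclusion by a genuinely different technical route than the paper. The paper's own proof is two lines: it applies the Skorohod representation theorem (Theorem~4.30 in \cite{Kallenberg:2001}) to upgrade \eqref{eq:lcm_conv2} to almost sure convergence, and then invokes a general-purpose result proved in the Appendix, Proposition~\ref{prop:reg_var_moments}, which states that if $X_n/a_n\to X$ a.s.\ with $0\leq X_n/a_n\leq C$ and $a_n\to\infty$, then $\E f(X_n)\sim(\E X^{\beta})f(a_n)$; the proof of that proposition replaces $f$ by a nondecreasing asymptotic equivalent $g$ (Theorem~1.5.3 in \cite{BGT}), splits on $\{X_n/a_n>\varepsilon\}$, and combines the uniform convergence theorem with monotonicity and bounded convergence. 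You instead bypass Skorohod entirely and work with the weak convergence $R_n\dodn X$ directly, substituting Potter's inequality for the monotone rectification and the portmanteau/mapping theorem (at the non-atom $\varepsilon$ of $X$, whose non-atomicity you correctly justify) for the a.s.-plus-bounded-convergence step; your three-regime split ($L_n<X_0$ via local boundedness and $f(t_n)\to\infty$; $X_0\leq L_n<\varepsilon t_n$ via Potter with exponent $\beta-\delta>0$; $R_n\geq\varepsilon$ via the uniform convergence theorem) is the standard direct ``Breiman-type'' moment-transfer argument. Both routes rest on the same two inputs, \eqref{eq:lcm_conv2} and the uniform bound coming from \eqref{eq:product_bounded}, and both must neutralize the small-LCM regime, which you rightly flag as the delicate point. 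What each buys: the paper's version isolates the analysis in a reusable, self-contained proposition (at the price of invoking Skorohod and the existence of a monotone equivalent), while yours is more elementary in its probabilistic ingredients and never leaves the weak-convergence setting. One cosmetic remark: the function $x\mapsto x^{\beta}\1_{\{x\geq\varepsilon\}}$ is unbounded on $\R_{\geq 0}$, so strictly speaking you should apply portmanteau to its truncation $x\mapsto\min(x,1)^{\beta}\1_{\{x\geq\varepsilon\}}$, which agrees with it on $[0,1]$ where $R_n$ and $X$ live; this is exactly the observation you make and costs nothing.
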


\section{Proofs of the main results}
\label{sec:proofs}

We start with the detailed analysis of the counting functions $\vert H_{\ell,r}(n)\vert $, which is an
essential ingredient for the proofs of our main results.

\subsection{Properties of the counting function when \texorpdfstring{$\ell=1$}{l=1} and \texorpdfstring{$\ell=r$}{l=r}.}

We first consider the case $\ell=1$ and $r\in\N$. Then
\begin{equation*}
   H_{1,r}(n)=\{(i_1,\ldots,i_r)\in\N^r: i_1+i_2+\cdots+i_r\leq n\},
\end{equation*}
and there is the obvious exact formula $\vert H_{1,r}(n)\vert =\binom{n}{r}$,
which entails that, as $n\to\infty$,
\begin{equation}\label{corr:H1r(n)}
   \vert H_{1,r}(n)\vert ~\sim~\frac{n^r}{r!}.
\end{equation}
Assume now that 
$r=\ell\geq 2$. Then
\begin{equation*}
   H_{r,r}(n)=\{(i_1,\ldots,i_r)\in\N^r: i_1 i_2\cdots i_r\leq n\}.
\end{equation*}
Although there is no simple exact formula for the cardinality of $H_{r,r}(n)$, one can easily derive the exact growth rate of $\vert H_{r,r}(n)\vert $. This is given in the next proposition.
\begin{proposition}\label{prop:Hrr(n)}
For 
fixed $r\geq 2$, as $n\to\infty$, 
\begin{equation*}
   \vert H_{r,r}(n)\vert =\frac{n\log^{r-1}n}{(r-1)!}+O(n\log^{r-2}n).
\end{equation*}
\end{proposition}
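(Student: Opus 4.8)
The plan is to count $\vert H_{r,r}(n)\vert$ by iterating the one-dimensional Dirichlet divisor idea. Write
\begin{equation*}
   \vert H_{r,r}(n)\vert=\sum_{i_1 i_2\cdots i_r\leq n}1=\sum_{i_1=1}^{n}D_{r-1}(\lfloor n/i_1\rfloor),
\end{equation*}
where $D_{r-1}(m):=\vert H_{r-1,r-1}(m)\vert$ is the number of ways of writing an integer $\leq m$ as an ordered product of $r-1$ positive factors. This reduces the problem to a recursion in $r$, with the base case $r=2$ given by the classical estimate $\sum_{i\leq n}\lfloor n/i\rfloor=n\log n+O(n)$. I would then prove the claim by induction on $r$.

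For the inductive step I would assume $D_{r-1}(m)=\frac{m\log^{r-2}m}{(r-2)!}+O(m\log^{r-3}m)$ and substitute into the sum over $i_1$. The clean way to handle this is to replace the exact counting function by its smooth approximation and control the error. Concretely, one compares $\vert H_{r,r}(n)\vert$ with the integral $\int_{1}^{n}\frac{(n/t)\log^{r-2}(n/t)}{(r-2)!}\,\frac{{\rm d}t}{t}$ (arising from partial summation against $D_{r-1}$), which after the substitution $t=n^{u}$ yields $\frac{n}{(r-2)!}\int_{0}^{1}(\log n)^{r-1}(1-u)^{r-2}\,{\rm d}u=\frac{n\log^{r-1}n}{(r-1)!}$, matching the leading term. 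The error terms coming from (i) the $O(m\log^{r-3}m)$ remainder in the inductive hypothesis, (ii) replacing $\lfloor n/i_1\rfloor$ by $n/i_1$, and (iii) passing from the sum to the integral, would all need to be shown to be $O(n\log^{r-2}n)$.

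The main obstacle is bookkeeping the error terms uniformly, since the induction hypothesis involves $\log^{r-3}n$ and one must verify that summing or integrating such a remainder against $1/i_1$ produces exactly one extra logarithmic factor and no more, i.e.\ stays at $O(n\log^{r-2}n)$ rather than leaking into the main order. A subtlety is that $\log^{r-2}(n/t)$ and $\log^{r-3}(n/t)$ degenerate or change sign as $t$ approaches $n$, so the integral comparison must be split (for instance at $t=\sqrt{n}$) to separate the region where $n/i_1$ is large—where the asymptotic is valid—from the boundary region where $n/i_1$ is bounded and contributes only $O(n)$ or lower order. Handling this splitting carefully, together with the standard estimate $\sum_{i\leq n}\frac{\log^{j}(n/i)}{i}=\frac{\log^{j+1}n}{j+1}+O(\log^{j}n)$, is the technical heart of the argument.

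Alternatively, and perhaps more transparently, I would invoke the known asymptotics of the Dirichlet divisor function $d_r(m)=\sum_{i_1\cdots i_r=m}1$, whose summatory function $\sum_{m\leq n}d_r(m)=\vert H_{r,r}(n)\vert$ is classically known to equal $n$ times a degree $(r-1)$ polynomial in $\log n$ with leading coefficient $1/(r-1)!$, plus a power-saving error term; this is a standard consequence of contour integration against $\zeta(s)^{r}$ (which has a pole of order $r$ at $s=1$). Citing this directly gives the stated estimate immediately, but since the paper seems to prefer elementary self-contained arguments, I would present the inductive Dirichlet-hyperbola approach above as the primary proof and mention the analytic route as a remark.
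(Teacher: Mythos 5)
Your proposal is correct and is essentially the paper's own proof: the paper uses exactly your recursion $\vert H_{r,r}(n)\vert=\sum_{i=1}^{n}\vert H_{r-1,r-1}(\lfloor n/i\rfloor)\vert$ together with induction on $r$, the inductive step being carried by precisely the estimate you cite, $\sum_{i\leq n}\frac{n}{i}\log^{k-1}(n/i)=\frac{n\log^{k}n}{k}+O(n\log^{k-1}n)$, with the boundary terms (where $\lfloor n/i\rfloor$ is bounded) absorbed into the error as you indicate. One small slip to fix: the intermediate integral you display carries a spurious extra factor $1/t$ --- as written, $\int_{1}^{n}\frac{(n/t)\log^{r-2}(n/t)}{(r-2)!}\,\frac{{\rm d}t}{t}$ is of order $n\log^{r-2}n$; it should read $\int_{1}^{n}\frac{n\log^{r-2}(n/t)}{(r-2)!}\,\frac{{\rm d}t}{t}$ (equivalently $\int_{1}^{n}\frac{(n/t)\log^{r-2}(n/t)}{(r-2)!}\,{\rm d}t$), which is what your substitution $t=n^{u}$ actually evaluates and which indeed yields $\frac{n\log^{r-1}n}{(r-1)!}$.
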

\begin{proof}
Put $W_r(n):=\vert H_{r,r}(n)\vert$. Then 
$W_1(n)=n$ and
\begin{equation}\label{eq:Hrr(n)_proof1}
W_r(n)=\sum_{i=1}^{n}W_{r-1}\left(\left\lfloor \frac{n}{i}\right\rfloor \right),\quad r\geq 2.
\end{equation}
The claim of Proposition~\ref{prop:Hrr(n)} follows by induction on $r$ with the help of 
the asymptotic relation
\begin{equation*}
   \sum_{i=1}^{n}\left\lfloor\frac{n}{i}\right\rfloor\log^{k-1}\left(\left\lfloor \frac{n}{i}\right\rfloor \right)=\sum_{i=1}^{n}\frac{n}{i}\log^{k-1}\left(\frac{n}{i}\right)+O(n\log^{k-1} n)=\frac{n\log^k n}{k}+O(n\log^{k-1} n),\quad n\to\infty,
\end{equation*}
which holds for every fixed $k\in\N$.
\end{proof}

\begin{corollary}\label{corr:Hrr(n)}
For 
fixed $r\in\N$, the sequence $(\vert H_{r,r}(n)\vert )_{n\in\N}$ is regularly varying at $\infty$ of index 
$1$, that is, for each
$\lambda>0$,
\begin{equation*}
   \lim_{n\to\infty}\frac{\vert H_{r,r}(\lfloor \lambda n\rfloor)\vert }{\vert H_{r,r}(n)\vert }=\lambda.
\end{equation*}
\end{corollary}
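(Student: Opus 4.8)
The plan is to deduce this corollary directly from the asymptotic expansion furnished by Proposition~\ref{prop:Hrr(n)}, rather than reprove anything from scratch. By that proposition, for fixed $r\geq 2$,
\begin{equation*}
   \vert H_{r,r}(n)\vert =\frac{n\log^{r-1}n}{(r-1)!}\bigl(1+O(1/\log n)\bigr),\quad n\to\infty,
\end{equation*}
since the error term $O(n\log^{r-2}n)$ is of smaller order than the main term $n\log^{r-1}n/(r-1)!$ by exactly one logarithmic factor. Thus $\vert H_{r,r}(n)\vert\sim n\log^{r-1}n/(r-1)!$, which exhibits $(\vert H_{r,r}(n)\vert)_{n\in\N}$ as (asymptotically) a constant multiple of the regularly varying sequence $n\mapsto n\log^{r-1}n$.

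First I would record the trivial case $r=1$ separately: there $\vert H_{1,1}(n)\vert=n$, so $\vert H_{1,1}(\lfloor\lambda n\rfloor)\vert/\vert H_{1,1}(n)\vert=\lfloor\lambda n\rfloor/n\to\lambda$ immediately. For $r\geq 2$ the strategy is to substitute the equivalence into the ratio and track each factor. Writing $m=\lfloor\lambda n\rfloor$, I have $m\sim\lambda n$, hence $\log m=\log n+\log\lambda+o(1)\sim\log n$, and therefore $\log^{r-1}m\sim\log^{r-1}n$. Consequently
\begin{equation*}
   \frac{\vert H_{r,r}(\lfloor\lambda n\rfloor)\vert}{\vert H_{r,r}(n)\vert}\sim\frac{\lfloor\lambda n\rfloor\log^{r-1}(\lfloor\lambda n\rfloor)}{n\log^{r-1}n}\longrightarrow\lambda\cdot 1=\lambda,\quad n\to\infty,
\end{equation*}
which is precisely the claimed relation.

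Alternatively, and perhaps more cleanly for the writeup, one can invoke the general theory of regular variation: a sequence equivalent to $n\mapsto n\,L(n)$ with $L$ a slowly varying function is regularly varying of index $1$, and $n\mapsto\log^{r-1}n$ is a standard example of a slowly varying function (see \cite{BGT}). The equivalence $\vert H_{r,r}(n)\vert\sim n\log^{r-1}n/(r-1)!$ then places the sequence in this class, and the defining limit relation follows. I would likely present the hands-on computation of the previous paragraph, since it is self-contained and transparent, and merely remark on the connection to the general framework.

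I do not anticipate a genuine obstacle here; the corollary is essentially a corollary in the literal sense, packaging Proposition~\ref{prop:Hrr(n)} into the language of regular variation needed later (for instance in Corollary~\ref{corr:LCM_moments}). The only point requiring a word of care is the passage $\log^{r-1}(\lfloor\lambda n\rfloor)\sim\log^{r-1}n$, which relies on the slow variation of the logarithm, i.e.\ on the additive perturbation $\log\lambda$ being negligible against $\log n$; this is routine but should be stated so that the argument is complete for every $r\geq 2$ uniformly.
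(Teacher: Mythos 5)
Your proposal is correct and follows exactly the route the paper intends: the paper states this corollary without a separate proof, treating it as an immediate consequence of the asymptotics $\vert H_{r,r}(n)\vert \sim n\log^{r-1}n/(r-1)!$ from Proposition~\ref{prop:Hrr(n)}, which is precisely the deduction you carry out (including the routine observations that $\lfloor\lambda n\rfloor\sim\lambda n$ and $\log^{r-1}(\lfloor\lambda n\rfloor)\sim\log^{r-1}n$, and the trivial case $r=1$).
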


The result of Corollary~\ref{corr:Hrr(n)} is less precise than that of Proposition~\ref{prop:Hrr(n)}. It is stated here only for comparison to its counterpart, Corollary~\ref{cor:counting_function_reg_var}, which treats the case $1<\ell<r$.

\subsection{Properties of the counting function when \texorpdfstring{$1<\ell<r$}{1<l<r}.}

Comparing \eqref{corr:H1r(n)} and Proposition~\ref{prop:Hrr(n)} and keeping in mind the homogeneity properties of $P_\ell$, one could think that the asymptotics of $\vert H_{\ell,r}(n)\vert $ in the intermediate regimes should be of the form $C_r n^{r/\ell}\log^{\ell-1}n$. This, however, turns out to be wrong in that 
there is no logarithmic factor, that is, the correct answer is $\vert H_{\ell,r}(n)\vert \sim C_r n^{r/\ell}$ for 
an appropriate $C_r>0$. This is, in fact, a consequence of the finiteness of the volumes $\mathcal{V}_{\ell,r}$ for $\ell<r$.

\begin{lemma}\label{lem:finite_volumes}
For all $r\geq 2$ and $1\leq \ell<r$, $\Vol(\mathcal{H}_{\ell,r}(c))=c^{r/\ell}\Vol(\mathcal{H}_{\ell,r}(1))<\infty$.
\end{lemma}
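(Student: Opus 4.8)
The plan is to handle the two assertions of the lemma separately. The scaling identity is immediate from the homogeneity of $P_\ell$: since $P_\ell(\lambda x)=\lambda^\ell P_\ell(x)$, the linear bijection $x\mapsto c^{-1/\ell}x$ maps $\mathcal{H}_{\ell,r}(c)$ onto $\mathcal{H}_{\ell,r}(1)$ and multiplies $r$-dimensional Lebesgue measure by $c^{-r/\ell}$, so $\Vol(\mathcal{H}_{\ell,r}(c))=c^{r/\ell}\Vol(\mathcal{H}_{\ell,r}(1))$. Thus everything reduces to proving that $\mathcal{V}_{\ell,r}=\Vol(\mathcal{H}_{\ell,r}(1))<\infty$ for $1\leq\ell<r$.

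The point to keep in mind is that one cannot simply dominate $P_\ell$ from below by a function with finite-volume sublevel set: Maclaurin's inequality gives only $P_\ell(x)\geq\binom{r}{\ell}(x_1\cdots x_r)^{\ell/r}$, and $\{x_1\cdots x_r\leq\mathrm{const}\}$ has infinite volume. Finiteness is therefore genuinely marginal, and I would extract it from the interplay of large and small coordinates. The device is to split $\mathcal{H}_{\ell,r}(1)$ according to the set $L\subseteq\{1,\dots,r\}$ of coordinates exceeding $1$. The decisive observation is that \emph{at most $\ell-1$ coordinates can exceed $1$}: if $\ell$ of the $x_i$ were all $>1$, a single degree-$\ell$ monomial built from them would already exceed $1$, forcing $P_\ell(x)>1$. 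Hence, up to a null set, $\mathcal{H}_{\ell,r}(1)$ is a \emph{finite} disjoint union of the pieces $A_L=\{x_i>1\ (i\in L),\ x_i\leq1\ (i\notin L)\}\cap\mathcal{H}_{\ell,r}(1)$ with $|L|=k\leq\ell-1$.

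On $A_L$ the $r-k$ small coordinates lie in $[0,1]$, so I would integrate out the large block first. For $k\geq 1$, pairing all $k$ large coordinates with each $(\ell-k)$-subset of the small ones produces monomials of $P_\ell$, giving $P_\ell(x)\geq\bigl(\prod_{i\in L}x_i\bigr)P_{\ell-k}(y)$, where $y$ is the vector of small coordinates; hence $\prod_{i\in L}x_i\leq B:=1/P_{\ell-k}(y)$. Since $\Vol\{z\in(1,\infty)^k:\prod z_i\leq B\}\leq B(\log B)^{k-1}/(k-1)!\leq C_{k,\epsilon}B^{1+\epsilon}$, integrating the large block reduces the finiteness of $\Vol(A_L)$ to that of $\int_{[0,1]^{r-k}}P_{\ell-k}(y)^{-(1+\epsilon)}\,{\rm d}y$ (the piece $k=0$ lies in $[0,1]^r$ and contributes at most $1$). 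Here the strict inequality $\ell<r$ is exactly what is needed: $d:=\ell-k<r-k=:s$, so Maclaurin's inequality in the form $P_d(y)\geq\binom{s}{d}(y_1\cdots y_s)^{d/s}$ bounds the last integral by $\binom{s}{d}^{-(1+\epsilon)}\prod_{i=1}^s\int_0^1 y_i^{-(1+\epsilon)d/s}\,{\rm d}y_i$, which converges once $(1+\epsilon)d/s<1$, i.e.\ for small $\epsilon>0$ because $d/s<1$. Summing over the finitely many $L$ then yields $\mathcal{V}_{\ell,r}<\infty$. I expect the main obstacle to be the conceptual first move — realising that the count of large coordinates is capped at $\ell-1$ — since this is precisely what turns an unbounded, infinite-volume product geometry into finitely many pieces each carrying only one integrable singularity that Maclaurin's inequality disposes of.
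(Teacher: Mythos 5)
Your proof is correct, but it takes a genuinely different route from the paper's. The paper argues in two steps: first it establishes finiteness in the single extreme case $\ell=r-1$ via the change of variables $z_j=(y_1y_2\cdots y_r)/y_j$, which maps $\{P_{r-1}\leq 1\}$ onto the simplex $\{z_1+\cdots+z_r\leq 1\}$ with Jacobian proportional to $(z_1\cdots z_r)^{-(r-2)/(r-1)}$, an integrable singularity on $[0,1]^r$ giving the explicit bound $\mathcal{V}_{r-1,r}\leq (r-1)^{r-1}$; second, it reduces every $\ell<r$ to that case through the inclusion $\mathcal{H}_{\ell,r}(1)\subseteq\mathcal{H}_{r-1,r}(r)$, obtained by multiplying the inequalities $x_{i_1}\cdots x_{i_\ell}\leq 1$ over all $\ell$-tuples avoiding a fixed index $k$. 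Your argument instead decomposes $\mathcal{H}_{\ell,r}(1)$ by the set $L$ of coordinates exceeding $1$, caps $|L|$ at $\ell-1$ (correct: $\ell$ large coordinates would already force one monomial, hence $P_\ell$, above $1$), integrates out the large block with the bound $\Vol\{z\in(1,\infty)^k:\prod_i z_i\leq B\}\leq B(\log B)^{k-1}/(k-1)!\leq C_{k,\epsilon}B^{1+\epsilon}$ (valid by induction on $k$, and trivially when $B<1$ since the set is then empty), and kills the resulting singularity $\int_{[0,1]^{r-k}}P_{\ell-k}(y)^{-(1+\epsilon)}\,{\rm d}y$ with Maclaurin's inequality, where $(\ell-k)/(r-k)<1$ is precisely where $\ell<r$ enters; the monomial estimate $P_\ell(x)\geq\bigl(\prod_{i\in L}x_i\bigr)P_{\ell-k}(y)$ and the Tonelli step are all sound (the set where $P_{\ell-k}(y)=0$ is null, and in fact the sets $A_L$ partition the region exactly, so no null-set caveat is even needed). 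As for what each approach buys: the paper's substitution is shorter and slicker and produces a clean closed-form bound, but the reduction to $\ell=r-1$ is somewhat opaque; your decomposition treats all $\ell<r$ uniformly, makes the role of the hypothesis $\ell<r$ transparent (including your apt preliminary remark that dominating $P_\ell$ by $\binom{r}{\ell}(x_1\cdots x_r)^{\ell/r}$ alone cannot work), and yields quantitative bounds for every $\mathcal{V}_{\ell,r}$, at the cost of more bookkeeping. It is also worth noting that the paper's tuple-multiplication trick and your large-coordinate cap are cousins: both exploit that many $\ell$-fold products being small forces all but few coordinates to be small.
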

\begin{proof}
We proceed in two steps. First, we show that
\begin{equation}\label{eq:finite_volume_r-1_r}
   \Vol(\mathcal{H}_{r-1,r}(1))<\infty.
\end{equation}
As a second step, we prove that
\begin{equation}\label{eq:finite_volume_l_r_inclusion}
\mathcal{H}_{\ell,r}(1)\subseteq \mathcal{H}_{r-1,r}(r),\quad \ell<r.
\end{equation}
To check \eqref{eq:finite_volume_r-1_r}, observe that
\begin{equation*}
   \Vol(\mathcal{H}_{r-1,r}(1))= \int_0^\infty\cdots\int_0^\infty\1_{\{P_{r-1}(y_1,y_2,\ldots,y_r)\leq 1\}}{\rm d}y_1\cdots{\rm d}y_r.
\end{equation*}
Changing the variables 
$z_j:=(y_1y_2\cdots y_r)/y_j$ or, equivalently,
$y_j=(z_1 z_2\cdots z_r)^{1/(r-1)}z_j^{-1}$,  $j=1,\ldots,r$, we conclude that 
the partial derivatives are given by 
\begin{equation*}
   \frac{\partial y_j}{\partial z_k}=
\begin{cases}
(r-1)^{-1}(z_1z_2\cdots z_r)^{1/(r-1)}z_j^{-1}z_k^{-1}, &j\neq k,\\
\frac{2-r}{r-1}(z_1z_2\cdots z_r)^{1/(r-1)}z_j^{-2}, & j=k.\\
\end{cases}
\end{equation*}
Thus, the Jacobian determinant $J$ is equal to
\begin{multline*}
   J=
(z_1z_2\cdots z_r)^{\frac{r}{r-1}}\left\vert
\begin{matrix}
\frac{2-r}{r-1}\frac{1}{z_1^2} & \frac{1}{r-1}\frac{1}{z_1 z_2} & \cdots & \frac{1}{r-1}\frac{1}{z_1 z_r}\\
\frac{1}{r-1}\frac{1}{z_2 z_1} & \frac{2-r}{r-1}\frac{1}{z_2^2} & \cdots & \frac{1}{r-1}\frac{1}{z_2 z_r}\\
\vdots & \vdots & \ddots & \vdots\\
\frac{1}{r-1}\frac{1}{z_r z_1} & \frac{1}{r-1}\frac{1}{z_r z_2} & \cdots & \frac{2-r}{r-1}\frac{1}{z_r^2}
\end{matrix}
\right\vert =
\frac{1}{(z_1z_2\cdots z_r)^{\frac{r-2}{r-1}}}\left\vert
\begin{matrix}
\frac{2-r}{r-1} & \frac{1}{r-1} & \cdots & \frac{1}{r-1}\\
\frac{1}{r-1}& \frac{2-r}{r-1} & \cdots & \frac{1}{r-1}\\
\vdots & \vdots & \ddots & \vdots\\
\frac{1}{r-1} & \frac{1}{r-1} & \cdots & \frac{2-r}{r-1}
\end{matrix}
\right\vert \\ =\frac{(-1)^{r-1}}{(r-1)(z_1z_2\cdots z_r)^{\frac{r-2}{r-1}}},
\end{multline*}
whence 
\begin{multline*}
\Vol(\mathcal{H}_{r-1,r}(1))=\frac{1}{r-1}\int_0^\infty\cdots\int_0^\infty\frac{\1_{\{z_1+z_2+\cdots+z_r \leq 1\}}}{(z_1z_2\cdots z_r)^{\frac{r-2}{r-1}}}{\rm d}z_1\cdots{\rm d}z_r\\
\leq \frac{1}{r-1}\int_0^1\cdots\int_0^1\frac{1}{(z_1z_2\cdots z_r)^{\frac{r-2}{r-1}}}{\rm d}z_1\cdots{\rm d}z_r=(r-1)^{r-1}<\infty.
\end{multline*}
This proves 
\eqref{eq:finite_volume_r-1_r}.

Turning to
\eqref{eq:finite_volume_l_r_inclusion}, pick $(x_1,x_2,\ldots ,x_r)\in\mathcal{H}_{\ell,r}(1)$. Then
\begin{equation*}
   x_{i_1}x_{i_2}\cdots x_{i_\ell}\leq 1,\quad\text{ for every } \ell\text{-tuple}\quad 1\leq i_1< i_2<\cdots <i_\ell\leq r.
\end{equation*}
Fix $k=1,\ldots,r$ and multiply the above inequalities over all $\ell$-tuples taken from $\{1,2,\ldots,k-1,k+1,\ldots,r\}$. This yields $x_{1}x_2\cdots x_{k-1}x_{k+1}\cdots x_r\leq 1$ and thereupon $P_{r-1}(x_1,x_2,\ldots,x_r)\leq r$, meaning that $(x_1,x_2,\ldots,x_r)\in \mathcal{H}_{r-1,r}(r)$.
\end{proof}
\begin{proposition}\label{prop:Hrl(n)}
For 
fixed $r\geq 2$ and $\ell<r$, 
\begin{equation*}
   \lim_{n\to\infty} \frac{\vert H_{\ell,r}(n)\vert}{n^{r/\ell}}=\mathcal{V}_{\ell,r}.
\end{equation*}
\end{proposition}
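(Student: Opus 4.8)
The plan is to read $\lvert H_{\ell,r}(n)\rvert$ as a lattice-point count and compare it with the volume $\mathcal V_{\ell,r}$ through a Riemann-sum argument. Writing $g:=\1_{\{P_\ell\leq 1\}}$ for the indicator of $\mathcal H_{\ell,r}(1)$ and using the homogeneity $P_\ell(i_1,\ldots,i_r)=n\,P_\ell(i_1 n^{-1/\ell},\ldots,i_r n^{-1/\ell})$, one has $i\in H_{\ell,r}(n)$ if and only if $g(i\,n^{-1/\ell})=1$; thus $n^{-r/\ell}\lvert H_{\ell,r}(n)\rvert=\sum_{i\in\N^r} n^{-r/\ell} g(i\,n^{-1/\ell})$ is precisely a cube-anchored Riemann sum of mesh $n^{-1/\ell}$ for $\int_{\R_{\geq0}^r} g=\mathcal V_{\ell,r}$.

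The structural fact that makes the comparison clean is that $P_\ell$ has nonnegative coefficients, hence is nondecreasing in each variable; equivalently $g$ is coordinatewise nonincreasing and $\mathcal H_{\ell,r}(1)$ is a down-set. I would exploit this to turn the Riemann sums into honest one-sided bounds. For the upper bound, associate to each $i\in\N^r$ the half-open cube $Q_i:=\prod_{j}[(i_j-1)n^{-1/\ell},\,i_j n^{-1/\ell})$, whose upper corner is $i\,n^{-1/\ell}$; since $g$ is nonincreasing, $g(x)\geq g(i\,n^{-1/\ell})$ on $Q_i$, and as the $Q_i$ tile $\R_{\geq 0}^r$, summing $\int_{Q_i} g\geq n^{-r/\ell} g(i\,n^{-1/\ell})$ yields $n^{-r/\ell}\lvert H_{\ell,r}(n)\rvert\leq\mathcal V_{\ell,r}$ for every $n$. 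For the lower bound, anchor instead at the lower corner, i.e.\ use $\widetilde Q_i:=\prod_j[i_j n^{-1/\ell},\,(i_j+1)n^{-1/\ell})$; now $g(x)\leq g(i\,n^{-1/\ell})$ on $\widetilde Q_i$, and since these cubes tile $[n^{-1/\ell},\infty)^r$ one obtains $n^{-r/\ell}\lvert H_{\ell,r}(n)\rvert\geq\int_{[n^{-1/\ell},\infty)^r} g$.

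It then remains to let $n\to\infty$ in the lower bound. Here the finiteness $\mathcal V_{\ell,r}<\infty$ from Lemma~\ref{lem:finite_volumes} is decisive: since $g\in L^1(\R_{\geq0}^r)$ and $[n^{-1/\ell},\infty)^r\uparrow(0,\infty)^r$ as $n\to\infty$, dominated (or monotone) convergence gives $\int_{[n^{-1/\ell},\infty)^r} g\to\int_{(0,\infty)^r} g=\mathcal V_{\ell,r}$, the coordinate hyperplanes being Lebesgue-null. Combining the two bounds and squeezing yields $\lim_{n\to\infty} n^{-r/\ell}\lvert H_{\ell,r}(n)\rvert=\mathcal V_{\ell,r}$.

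I expect the main obstacle to be exactly what the monotonicity together with the integrability circumvents: for $\ell\geq 2$ the region $\mathcal H_{\ell,r}(1)$ is unbounded and contains infinitely many lattice points accumulating on the coordinate hyperplanes $\{x_j=0\}$, where $P_\ell$ may vanish, so a crude two-sided sandwich anchored symmetrically would produce a divergent series near the axes. Routing the lower bound through the shrinking sliver $\R_{\geq0}^r\setminus[n^{-1/\ell},\infty)^r$ and controlling it by integrability of $g$, rather than by counting those boundary points directly, is the step that needs care.
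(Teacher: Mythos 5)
Your proposal is correct and takes essentially the same route as the paper: the paper likewise rescales by homogeneity to a mesh-$n^{-1/\ell}$ Riemann sum and then invokes its Appendix Proposition~\ref{prop:dri}, whose proof is precisely your two-sided monotonicity sandwich (upper bound by $\int_{\R_{\geq 0}^r}g$, lower bound by $\int_{[n^{-1/\ell},\infty)^r}g$) followed by dominated convergence, with integrability supplied by Lemma~\ref{lem:finite_volumes}. The only difference is presentational: you inline the argument behind that appendix proposition instead of citing it as a separate statement.
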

\begin{proof}
By homogeneity of $P_\ell$,
\begin{equation*}
   \frac{\vert H_{\ell,r}(n)\vert}{n^{r/\ell}} =\frac{1}{n^{r/\ell}}\sum_{i_1=1}^{\infty}\cdots\sum_{i_r=1}^{\infty}\1_{\{P_\ell(i_1,\ldots,i_r)\leq n\}}=\frac{1}{(n^{1/\ell})^r}\sum_{i_1=1}^{\infty}\cdots\sum_{i_r=1}^{\infty}\1_{\{P_\ell(i_1/n^{1/\ell},\ldots,i_r/n^{1/\ell})\leq 1\}}.
\end{equation*}
The claim follows from Proposition~\ref{prop:dri} (see Appendix~\ref{sec:app}) applied to the function $g(y_1,\ldots,y_r):=\1_{\{P_\ell(y_1,\ldots,y_r)\leq 1\}}$. Indeed, while this function is obviously coordinatewise nonincreasing, 
its integrability follows from Lemma~\ref{lem:finite_volumes}.
\end{proof}

\begin{corollary}\label{cor:counting_function_reg_var}
For 
fixed $r\geq 2$ and $1\leq \ell<r$, the sequence $(\vert H_{\ell,r}(n)\vert)_{n\geq \binom{r}{\ell}}$ 
is regularly varying at $\infty$ of index 
$r/\ell$, that is, for each
$\lambda>0$,
\begin{equation*}
   \lim_{n\to\infty}\frac{\vert H_{\ell,r}(\lfloor n\lambda\rfloor)\vert }{\vert H_{\ell,r}(n)\vert }=\lambda^{r/\ell}.
\end{equation*}
\end{corollary}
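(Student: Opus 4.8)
The plan is to deduce the corollary directly from the sharp asymptotics in Proposition~\ref{prop:Hrl(n)}, which gives $\vert H_{\ell,r}(n)\vert \sim \mathcal{V}_{\ell,r}\,n^{r/\ell}$ as $n\to\infty$. The underlying principle is that once the leading term of a sequence is a positive constant times the pure power $n^{r/\ell}$, regular variation of index $r/\ell$ is automatic: the constant cancels in the ratio, leaving only the power.

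First I would record that $\mathcal{V}_{\ell,r}\in(0,\infty)$. Its finiteness is exactly the content of Lemma~\ref{lem:finite_volumes}, while its positivity follows from the continuity of $P_\ell$ together with $P_\ell(0,\ldots,0)=0<1$, which forces $\mathcal{H}_{\ell,r}(1)$ to contain a neighbourhood of the origin in $\mathbb{R}^r_{\geq 0}$ and hence to have strictly positive Lebesgue measure. This positivity is what legitimises dividing by $\mathcal{V}_{\ell,r}$ in the computation below.

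Next, fixing $\lambda>0$, I would write the ratio of interest as the product of three factors,
\[
\frac{\vert H_{\ell,r}(\lfloor n\lambda\rfloor)\vert}{\vert H_{\ell,r}(n)\vert}
=\frac{\vert H_{\ell,r}(\lfloor n\lambda\rfloor)\vert}{(\lfloor n\lambda\rfloor)^{r/\ell}}
\cdot\frac{(\lfloor n\lambda\rfloor)^{r/\ell}}{n^{r/\ell}}
\cdot\frac{n^{r/\ell}}{\vert H_{\ell,r}(n)\vert},
\]
and pass to the limit in each factor as $n\to\infty$. Since $\lfloor n\lambda\rfloor\to\infty$, Proposition~\ref{prop:Hrl(n)} applies with argument $\lfloor n\lambda\rfloor$ and shows that the first factor converges to $\mathcal{V}_{\ell,r}$; the same proposition gives that the third factor converges to $1/\mathcal{V}_{\ell,r}$; and the middle factor converges to $\lambda^{r/\ell}$ because $\lfloor n\lambda\rfloor/n\to\lambda$ and $x\mapsto x^{r/\ell}$ is continuous. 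Multiplying the three limits yields $\lambda^{r/\ell}$, which is the assertion.

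I do not anticipate any genuine obstacle: the corollary is essentially a reformulation of Proposition~\ref{prop:Hrl(n)} in the vocabulary of regular variation. The only points demanding a word of care are the strict positivity of $\mathcal{V}_{\ell,r}$, without which the cancellation of constants is unjustified, and the elementary observation that $\lfloor n\lambda\rfloor\to\infty$, which is needed before Proposition~\ref{prop:Hrl(n)} may legitimately be invoked with the floored argument.
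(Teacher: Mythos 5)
Your proof is correct and is exactly the argument the paper intends: the corollary is stated as an immediate consequence of Proposition~\ref{prop:Hrl(n)}, with the ratio converging to $\lambda^{r/\ell}$ because the constant $\mathcal{V}_{\ell,r}$ cancels. Your additional remarks (positivity of $\mathcal{V}_{\ell,r}$ via a neighbourhood of the origin, and $\lfloor n\lambda\rfloor\to\infty$ so the proposition applies along that integer sequence) are valid and merely make explicit what the paper leaves unsaid.
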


\begin{proposition}\label{prop:counting_function_asymp}
For
fixed $r\geq 2$, $1\leq \ell\leq r$ and $t_1,\ldots,t_r>0$, 
\begin{equation*}
   \lim_{n\to\infty}\frac{\vert \{(i_1,\ldots,i_r)\in\N^r:P_\ell(t_1 i_1,\ldots,t_r i_r)\leq n\}\vert }{\vert H_{\ell,r}(n)\vert }=\left(\prod_{k=1}^{r}t_k\right)^{-1}.
\end{equation*}
\end{proposition}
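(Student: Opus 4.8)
The plan is to treat separately the boundary case $\ell=r$, where the limiting volume $\mathcal V_{r,r}$ is infinite, and the case $\ell<r$, where Proposition~\ref{prop:Hrl(n)} supplies a finite normalizing constant; the two rely on genuinely different inputs.

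Consider first $\ell=r$. Writing $T:=t_1t_2\cdots t_r$, the identity $P_r(t_1x_1,\ldots,t_rx_r)=T\,x_1x_2\cdots x_r$ turns the constraint $P_r(t_1i_1,\ldots,t_ri_r)\leq n$ into $i_1i_2\cdots i_r\leq n/T$. Since the $i_k$ are positive integers, their product is an integer, and so the numerator equals $\vert H_{r,r}(\lfloor n/T\rfloor)\vert$ exactly. Taking $\lambda:=1/T$ in Corollary~\ref{corr:Hrr(n)} (regular variation of index $1$) then immediately yields that the ratio tends to $1/T=\prod_{k=1}^{r}t_k^{-1}$.

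For $1\leq\ell<r$ I would mimic the rescaling argument that proves Proposition~\ref{prop:Hrl(n)}. Because $P_\ell$ is homogeneous of degree $\ell$, the condition $P_\ell(t_1i_1,\ldots,t_ri_r)\leq n$ is equivalent to $P_\ell(t_1 i_1/n^{1/\ell},\ldots,t_r i_r/n^{1/\ell})\leq 1$, whence
\[
\frac{\vert\{(i_1,\ldots,i_r)\in\N^r:P_\ell(t_1i_1,\ldots,t_ri_r)\leq n\}\vert}{n^{r/\ell}}=\frac{1}{(n^{1/\ell})^r}\sum_{i_1=1}^{\infty}\cdots\sum_{i_r=1}^{\infty}\1_{\{P_\ell(t_1 i_1/n^{1/\ell},\ldots,t_r i_r/n^{1/\ell})\leq 1\}},
\]
which is a Riemann-type sum for the function $g(y_1,\ldots,y_r):=\1_{\{P_\ell(t_1y_1,\ldots,t_ry_r)\leq 1\}}$. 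This $g$ is coordinatewise nonincreasing, since $P_\ell$ is increasing in each argument, and it is integrable: the substitution $u_k=t_ky_k$ gives $\int g=\bigl(\prod_{k=1}^{r}t_k^{-1}\bigr)\mathcal V_{\ell,r}<\infty$ by Lemma~\ref{lem:finite_volumes}. Proposition~\ref{prop:dri} then shows that the displayed ratio tends to $\int g=\bigl(\prod_{k=1}^{r}t_k^{-1}\bigr)\mathcal V_{\ell,r}$. Dividing by $\vert H_{\ell,r}(n)\vert/n^{r/\ell}\to\mathcal V_{\ell,r}$ from Proposition~\ref{prop:Hrl(n)} cancels $\mathcal V_{\ell,r}$ and leaves $\prod_{k=1}^{r}t_k^{-1}$.

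The only genuinely delicate point is the verification of the hypotheses of Proposition~\ref{prop:dri} in the case $\ell<r$, namely the monotonicity and integrability of $g$; once these are in place the conclusion follows formally. It is the change of variables $u_k=t_ky_k$ that produces the Jacobian factor $\prod_{k=1}^{r}t_k^{-1}$, which is exactly the source of the claimed limit.
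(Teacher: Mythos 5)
Your proof is correct and follows essentially the same route as the paper: for $\ell=r$ the numerator is rewritten as $\vert H_{r,r}(\lfloor n/(t_1\cdots t_r)\rfloor)\vert$ and Corollary~\ref{corr:Hrr(n)} is invoked, while for $\ell<r$ the rescaled count is identified as a Riemann-type sum for $g(y_1,\ldots,y_r)=\1_{\{P_\ell(t_1y_1,\ldots,t_ry_r)\leq 1\}}$, handled by Proposition~\ref{prop:dri} together with Proposition~\ref{prop:Hrl(n)}, with the substitution $u_k=t_ky_k$ producing the factor $\bigl(\prod_{k=1}^{r}t_k\bigr)^{-1}$. Your explicit verification of the monotonicity and integrability hypotheses of Proposition~\ref{prop:dri} is a welcome addition that the paper leaves implicit.
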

\begin{proof}
If $\ell=r$, the claim immediately follows from Corollary~\ref{corr:Hrr(n)}, because
\begin{multline*}
\vert \{(i_1,\ldots,i_r)\in\N^r:P_r(t_1 i_1,\ldots,t_r i_r)\leq n\}\vert \\
=\vert \{(i_1,\ldots,i_r)\in\N^r: t_1 i_1\cdots t_r i_r\leq n\}\vert =\left\vert H_{r,r}\left(\left\lfloor \frac{n}{t_1 t_2\cdots t_r}\right\rfloor\right)\right\vert .
\end{multline*}
From now on, we assume that $\ell<r$. Write
\begin{align*}
\vert \{(i_1,\ldots,i_r)\in\N^r:P_\ell(t_1 i_1,\ldots,t_r i_r)\leq n\}\vert &=
\sum_{i_1=1}^{\infty}\cdots \sum_{i_r=1}^{\infty}\1_{\{P_\ell(t_1 i_1,\ldots,t_r i_r)\leq n\}}\\
&=\sum_{i_1=1}^{\infty}\cdots \sum_{i_r=1}^{\infty}\1_{\{P_\ell(t_1 i_1/n^{1/\ell},\ldots,t_r i_r/n^{1/\ell})\leq 1\}}.
\end{align*}
Applying Proposition~\ref{prop:dri} with the function $g(y_1,\ldots,y_r):=\1_{\{P_\ell(t_1 y_1,\ldots,t_r y_r)\leq 1\}}$ and using Proposition~\ref{prop:Hrl(n)}, we infer
\begin{multline*}
\lim_{n\to\infty}\frac{\vert \{(i_1,\ldots,i_r)\in\N^r:P_\ell(t_1 i_1,\ldots,t_r i_r)\leq n\}\vert }{\vert H_{\ell,r}(n)\vert }\\
=\frac{1}{\mathcal{V}_{\ell,r}}\int_0^\infty\cdots\int_0^\infty \1_{\{P_{\ell}(t_1y_1,\ldots,t_ry_r)\leq 1\}}{\rm d}y_1\cdots{\rm d}y_r=\left(\prod_{k=1}^{r}t_k\right)^{-1}.
\end{multline*}
For future use, we note here that
\begin{equation}\label{eq:prop1_proof0}
   \vert \{(i_1,\ldots,i_r)\in\N^r:P_\ell(t_1 i_1,\ldots,t_r i_r)\leq n\}\vert \leq\frac{n^{r/\ell}\mathcal{V}_{\ell,r}}{t_1t_2\cdots t_r},\quad 1\leq \ell<r,
\end{equation}
which is a direct consequence of monotonicity.
\end{proof}

\subsection{Proofs of Propositions~\ref{prop:joint_limit_l<r}, \ref{prop:joint_limit_l=r} and \ref{prop:product}}
\begin{proof}[Proof of Proposition~\ref{prop:joint_limit_l<r}]
The proof again relies on Proposition~\ref{prop:dri} from the Appendix. Note that
\begin{align*}
\mathbb{P}&\left\{V_1^{(n)}\leq \alpha_1 n^{1/\ell},\ldots,V_r^{(n)}\leq \alpha_r n^{1/\ell}\right\}\\
&=\frac{\vert \{(i_1,\ldots,i_r)\in\N^r: P_\ell(i_1,\ldots,i_r)\leq n,i_1\leq \alpha_1 n^{1/\ell},\ldots,i_r\leq \alpha_r n^{1/\ell} \}\vert }{\vert H_{\ell,r}(n)\vert }\\
&=\frac{n^{-r/\ell}\vert \{(i_1,\ldots,i_r)\in\N^r: P_\ell(i_1/n^{1/\ell},\ldots,i_r/n^{1/\ell})\leq 1,i_1/n^{1/\ell}\leq \alpha_1 ,\ldots,i_r/n^{1/\ell}\leq \alpha_r \}\vert }{n^{-r/\ell}\vert H_{\ell,r}(n)\vert},
\end{align*}
and the right-hand side converges, as $n\to\infty$, to
\begin{equation*}
   \frac{1}{\mathcal{V}_{\ell,r}}\int_0^{\alpha_1}\cdots\int_0^{\alpha_r}\1_{\{P_{\ell}(y_1,y_2,\ldots,y_r)\leq 1\}}{\rm d}y_1\cdots{\rm d}y_r.\qedhere
\end{equation*}
\end{proof}

We first prove Proposition~\ref{prop:product}, for this result will be used in the proof of Proposition~\ref{prop:joint_limit_l=r}.

\begin{proof}[Proof of Proposition~\ref{prop:product}]
For a
proof of \eqref{eq:product_conv1}, note that, for
$x\in [0,\,1]$ and $n\in\mathbb{N}$,
\begin{equation}\label{eq:1}
   \mathbb{P}\{V_{1}^{(n)}V_{2}^{(n)}\cdots V_{r}^{(n)}\leq xn\}=\frac{\vert \{(i_1,\ldots,i_r)\in\N^r: i_1i_2\cdots i_r\leq xn \}\vert }{\vert \{(i_1,\ldots,i_r)\in\N^r: i_1i_2\cdots i_r\leq n \}\vert },
\end{equation}
which, in view of Corollary~\ref{corr:Hrr(n)}, converges to $x$, as $n\to\infty$. As for \eqref{eq:product_conv2}, write
\begin{multline*}
\mathbb{P}\{V_{1}^{(n)}V_{2}^{(n)}\cdots V_{r}^{(n)} \leq x n^{r/\ell}\}=\frac{\vert \{(i_1,\ldots,i_r)\in\N^r: P_\ell(i_1,\ldots,i_r)\leq n,i_1\cdots i_r\leq xn^{r/\ell} \}\vert }{\vert H_{\ell,r}(n)\vert }\\
=\frac{n^{-r/\ell}\vert \{(i_1,\ldots,i_r)\in\N^r: P_\ell(i_1/n^{1/\ell},\ldots,i_r/n^{1/\ell})\leq 1, (i_1/n^{1/\ell})\cdots (i_r/n^{1/\ell})\leq x\}\vert }{n^{-r/\ell}\vert H_{\ell,r}(n)\vert }.
\end{multline*}
While the numerator converges to the integral on the right-hand side of \eqref{eq:product_distribution}, by Proposition~\ref{prop:dri} applied with $g(y_1,\ldots,y_r)=\1_{\{P_\ell(y_1,\ldots,y_r)\leq 1,\;y_1\cdots y_r\leq x\}}$, 
the denominator converges to $\mathcal{V}_{\ell,r}$, by Proposition~\ref{prop:Hrl(n)}. The value $x^{\ast}_{\ell,r}$ in \eqref{eq:product_distribution} 
is the supremum of the support of $U_{\ell,r}$. It can be found as the largest real number such that the surfaces $P_\ell(x_1,\ldots,x_r)=1$ and $x_1\cdots x_r=x^{\ast}_{\ell,r}$ have a nonempty intersection.

Formula \eqref{eq:product_bounded} is obvious for $\ell=r$, since, by construction, $(V_1^{(n)},\ldots,V_r^{(n)})$ is a point chosen at random 
in the set $H_{r,r}(n)$. Alternatively, \eqref{eq:product_bounded} follows on putting $x=1$ in \eqref{eq:1}. If $\ell<r$, formula \eqref{eq:product_bounded} can be proved as follows.
By definition, 
$P_\ell(V_1^{(n)},\ldots,V_r^{(n)})\leq n$, which implies
\begin{equation*}
\mathbb{P}\left\{\frac{V_{i_1}^{(n)}}{n^{1/\ell}}\frac{V_{i_2}^{(n)}}{n^{1/\ell}}\cdots \frac{V_{i_\ell}^{(n)}}{n^{1/\ell}}\leq 1\right\}=1,
\end{equation*}
for all $\ell$-tuples taken from $\{1,2,\ldots,r\}$. Multiplying all these inequalities, we obtain \eqref{eq:product_bounded}.
\end{proof}

\begin{proof}[Proof of Proposition~\ref{prop:joint_limit_l=r}]
We first observe that~\eqref{eq:product_conv1} implies
\begin{equation}\label{eq:proof_revision1}
\lim_{n\to\infty}\mathbb{P}\left\{V_1^{(n)}\cdots V_{r}^{(n)}\leq n^{\beta}\right\}=0,
\end{equation}
for every fixed $\beta<1$.

We shall prove a relation equivalent to \eqref{eq:joint_limit_l=r_alternative}, namely, for all $0<\beta_1<\ldots<\beta_{r-1}<\beta_{r}<1$ and sufficiently small $h_1,\ldots,h_{r-1},h_r>0$ such that the intervals
\begin{equation*}
   (\beta_1,\beta_1+h_1], (\beta_2,\beta_2+h_2],\ldots, (\beta_{r-1},\beta_{r-1}+h_{r-1}], (\beta_{r},\beta_{r}+h_{r}]
\end{equation*}
are disjoint,
\begin{multline}\label{eq:joint_limit_l=r_proof00}
\lim_{n\to\infty}\mathbb{P}\{V_1^{(n)}\in (n^{\beta_1},n^{\beta_1+h_1}],
V_1^{(n)}V_2^{(n)}\in (n^{\beta_2}, n^{\beta_2+h_2}],\ldots,V_1^{(n)}V_2^{(n)}\cdots V_{r}^{(n)}\in 
(n^{\beta_{r}},n^{\beta_{r}+h_{r}}]\}\\
=\mathbb{P}\{Z^{(1)}\in (\beta_1,\beta_1+h_1],Z^{(2)}\in (\beta_2,\beta_2+h_2],\ldots, Z^{(r)}\in (\beta_{r}, \beta_{r}+h_{r}]\}\\
=(r-1)! h_1\cdots h_{r-1}\1_{\{\beta_{r}+h_{r}\geq 1\}}.\hspace{75.6mm}
\end{multline}
The second equality in \eqref{eq:joint_limit_l=r_proof00} follows from the fact that $(Z^{(1)},\ldots,Z^{(r-1)})$ has a constant density in the region $\{(x_1,\ldots,x_{r-1})\in [0,1]^{r-1}:x_1\leq \cdots \leq x_{r-1}\leq 1\}$, which is equal to $(r-1)!$, see, for instance, formula (1.4) on p.~238 in \cite{Karlin:1968}. An appeal to~\eqref{eq:proof_revision1} and the fact that $V_1^{(n)}V_2^{(n)}\cdots V_{r}^{(n)}\leq n$ justifies the equivalence of~\eqref{eq:joint_limit_l=r_proof00} and
\begin{multline}\label{eq:joint_limit_l=r_proof0}
\lim_{n\to\infty}\mathbb{P}\{V_1^{(n)}\in (n^{\beta_1},n^{\beta_1+h_1}],
V_1^{(n)}V_2^{(n)}\in (n^{\beta_2}, n^{\beta_2+h_2}],\ldots,V_1^{(n)}V_2^{(n)}\cdots V_{r-1}^{(n)}\in (n^{\beta_{r-1}},n^{\beta_{r-1}+h_{r-1}}]\}\\
=(r-1)! h_1\cdots h_{r-1}.
\end{multline}

The probability on the left-hand side of \eqref{eq:joint_limit_l=r_proof0} is equal to
\begin{equation*}
   \frac{\vert \{(i_1,\ldots,i_r):i_1\in (n^{\beta_1},n^{\beta_1+h_1}], 
 \ldots, i_1\cdots i_{r-1}\in 
   (n^{\beta_{r-1}},n^{\beta_{r-1}+h_{r-1}}], i_1\cdots i_r\leq n\}\vert }{\vert H_{r,r}(n)\vert }.
\end{equation*}
Hence, 
according to Proposition~\ref{prop:Hrr(n)}, formula \eqref{eq:joint_limit_l=r_proof0} follows once we can check that the numerator 
is asymptotically equivalent to $h_1\cdots h_{r-1}n\log^{r-1}n$, as $n\to\infty$. The latter relation can be written as
\begin{multline*}
\sum_{i_1=1}^{\infty}\1_{\{i_1\in (n^{\beta_1}, n^{\beta_1+h_1}]\}}\sum_{i_2=1}^{\infty}\1_{\{i_2\in (n^{\beta_2}/i_1, n^{\beta_2+h_2}/i_1]\}}\cdots\sum_{i_{r-1}=1}^{\infty}\1_{\{i_{r-1}\in (n^{\beta_{r-1}}/(i_1\cdots i_{r-2}),n^{\beta_{r-1}+h_{r-1}}/(i_1\cdots i_{r-2})]\}}\\
\sum_{i_r=1}^{\infty}\1_{\{i_r \leq n/(i_1\cdot \ldots \cdot i_{r-1})\}}~\sim~h_1\cdots h_{r-1}n\log^{r-1}n,
\end{multline*}
or after calculating the rightmost sum as
\begin{multline}\label{eq:eq:joint_limit_l=r_proof1}
\sum_{i_1=1}^{\infty}\frac{\1_{\{i_1\in (n^{\beta_1},n^{\beta_1+h_1}]\}}}{i_1}\sum_{i_2=1}^{\infty}\frac{\1_{\{i_2\in (n^{\beta_2}/i_1,n^{\beta_2+h_2}/i_1]\}}}{i_2}\cdots\sum_{i_{r-1}=1}^{\infty}\frac{\1_{\{i_{r-1}\in (n^{\beta_{r-1}}/(i_1\cdots i_{r-2}),n^{\beta_{r-1}+h_{r-1}}/(i_1\cdots i_{r-2})]\}}}{i_{r-1}}\\
~\sim~h_1\cdots h_{r-1}\log^{r-1}n.
\end{multline}
Relation \eqref{eq:eq:joint_limit_l=r_proof1} readily follows by induction on $r\geq 2$ with the help of the formula 
\begin{equation*}
   \sum_{i=1}^{\infty}\frac{\1_{\{i\in [xn^{a},xn^{a+h}]\}}}{i}=h\log n+O(1),\quad n\to\infty,
\end{equation*}
which holds for all fixed $a,h>0$, uniformly in $x$ and $n$ satisfying 
$xn^{a}\to\infty$. In our setting, the latter relation is secured by 
$n^{\beta_{k-1}}/(i_1\cdots i_{k-2})\to \infty$ for every $k\geq 3$, which, in its turn, follows in view of $\beta_{k-2}+h_{k-2}<\beta_{k-1}$.
\end{proof}

\subsection{Prime decomposition}

The following proposition lies in the core of our main theorems and shows that as far as divisibility properties are concerned, the random vector $(V_1^{(n)},V_2^{(n)},\ldots,V_r^{(n)})$, uniformly distributed in the hyperbolic region $H_{\ell,r}(n)$, behaves as a set of $r$ independent variables uniformly distributed in $\{1,2,\ldots,n\}$, see, for example, Lemma~3.1 in \cite{Bostan+Marynych+Raschel:2019}.

\begin{proposition}\label{prop:convergence_to_geometrics}
Assume that $r\geq 2$. The following convergence in distribution holds true:
\begin{equation*}
   \left(\frac{V_{1}^{(n)}V_{2}^{(n)}\cdots V_{r}^{(n)}}{n^{r/\ell}},\left(\lambda_p(V_1^{(n)}),\ldots,\lambda_p(V_r^{(n)})\right)_{p\in\mathcal{P}}\right)\dodn \left(U_{\ell,r},\left(\mathcal{G}_1(p),\ldots,\mathcal{G}_r(p)\right)_{p\in\mathcal{P}}\right)
\end{equation*}
on $\mathbb{R}\times(\mathbb{R}^r)^{\infty}$, where $U_{\ell,r}$ on the right-hand side is independent of the $\mathcal{G}_k(p)$, for all $k=1,\ldots,r$ and $p\in\mathcal{P}$.
\end{proposition}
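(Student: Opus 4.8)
The plan is to establish the convergence on the product space $\mathbb{R}\times(\mathbb{R}^r)^\infty$ by verifying convergence of all finite-dimensional distributions, which is the standard convergence-determining criterion for a countable product of Polish spaces equipped with the product topology. Since each $\lambda_p(V_k^{(n)})$ is integer-valued, it suffices to fix a finite set of primes $p_1,\ldots,p_s$, nonnegative integers $a_{k,j}$ for $k=1,\ldots,r$ and $j=1,\ldots,s$, and a continuity point $x$ of $\mathbb{P}\{U_{\ell,r}\leq\,\cdot\,\}$, and to prove
\begin{equation*}
\mathbb{P}\Bigl\{\tfrac{V_1^{(n)}\cdots V_r^{(n)}}{n^{r/\ell}}\leq x,\ \lambda_{p_j}(V_k^{(n)})=a_{k,j}\ \forall k,j\Bigr\}\ \longrightarrow\ \mathbb{P}\{U_{\ell,r}\leq x\}\prod_{k=1}^{r}\prod_{j=1}^{s}\Bigl(1-\tfrac1{p_j}\Bigr)\tfrac1{p_j^{a_{k,j}}},
\end{equation*}
whose right-hand side is exactly the target joint law, with $U_{\ell,r}$ independent of the mutually independent geometrics $\mathcal{G}_k(p_j)$.

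The combinatorial core is to replace the exact-valuation constraints by pure divisibility constraints via inclusion--exclusion. Setting $d_k:=\prod_{j=1}^s p_j^{a_{k,j}}$ and, for $S\subseteq\{1,\ldots,s\}$, $d_k^{(S)}:=d_k\prod_{j\in S}p_j$, one has the pointwise identity
\begin{equation*}
\prod_{k=1}^r\1_{\{\lambda_{p_j}(V_k^{(n)})=a_{k,j}\,\forall j\}}=\sum_{(S_1,\ldots,S_r)}(-1)^{|S_1|+\cdots+|S_r|}\prod_{k=1}^r\1_{\{d_k^{(S_k)}\mid V_k^{(n)}\}},
\end{equation*}
the sum running over all $r$-tuples of subsets of $\{1,\ldots,s\}$; this follows coordinatewise from $\1_{\{p_j\nmid m'\ \forall j\}}=\prod_j(1-\1_{\{p_j\mid m'\}})$ after writing $V_k^{(n)}=d_k m'$. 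Because this is a \emph{finite} sum whose number of terms does not depend on $n$, the limit passes termwise through it.

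The analytic heart is then the single joint limit, for each fixed tuple $(S_1,\ldots,S_r)$,
\begin{equation*}
\mathbb{P}\Bigl\{d_k^{(S_k)}\mid V_k^{(n)}\ \forall k,\ V_1^{(n)}\cdots V_r^{(n)}\leq x\,n^{r/\ell}\Bigr\}\ \xrightarrow[n\to\infty]{}\ \frac{\mathbb{P}\{U_{\ell,r}\leq x\}}{d_1^{(S_1)}\cdots d_r^{(S_r)}}.
\end{equation*}
Writing $V_k^{(n)}=t_k i_k'$ with $t_k:=d_k^{(S_k)}$ turns the event into the scaled constraints $P_\ell(t_1 i_1',\ldots,t_r i_r')\leq n$ and $t_1\cdots t_r\,i_1'\cdots i_r'\leq x n^{r/\ell}$. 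For $\ell<r$ I would apply Proposition~\ref{prop:dri} to the coordinatewise-nonincreasing, integrable indicator $g(y)=\1_{\{P_\ell(t_1y_1,\ldots,t_ry_r)\leq1,\ t_1\cdots t_r\,y_1\cdots y_r\leq x\}}$ (exactly as in the proof of Proposition~\ref{prop:product}), then substitute $u_k=t_ky_k$ to pull out the Jacobian factor $(t_1\cdots t_r)^{-1}$ and recognize the remaining integral as $\mathcal{V}_{\ell,r}\,\mathbb{P}\{U_{\ell,r}\leq x\}$ via \eqref{eq:product_distribution}; dividing by $|H_{\ell,r}(n)|\sim\mathcal{V}_{\ell,r}n^{r/\ell}$ from Proposition~\ref{prop:Hrl(n)} yields the claim. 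For $\ell=r$ the product constraint is binding, the numerator equals $|H_{r,r}(\lfloor xn/(t_1\cdots t_r)\rfloor)|$, and Corollary~\ref{corr:Hrr(n)} (regular variation of index $1$) gives the same limit with $\mathbb{P}\{U_{r,r}\leq x\}=x$.

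Plugging this limit into the inclusion--exclusion sum, the product over $k$ factorizes and each coordinate contributes $\frac1{d_k}\sum_{S_k}\prod_{j\in S_k}(-1/p_j)=\frac1{d_k}\prod_{j=1}^s(1-1/p_j)$; since $1/d_k=\prod_j p_j^{-a_{k,j}}$, the total equals $\prod_{k,j}(1-1/p_j)p_j^{-a_{k,j}}=\prod_{k,j}\mathbb{P}\{\mathcal{G}_k(p_j)=a_{k,j}\}$, which is the desired right-hand side. The main obstacle is the joint scaling limit displayed above: it is neither literally Proposition~\ref{prop:counting_function_asymp} (which carries the scaling but lacks the product constraint) nor Proposition~\ref{prop:product} (which carries the product constraint but not the divisibility shift), so it must be assembled from the appendix's dominated-convergence tool when $\ell<r$ and from regular variation when $\ell=r$, taking care that the scaled indicator remains integrable (guaranteed by Lemma~\ref{lem:finite_volumes}) and that $x$ is a continuity point of the limit law.
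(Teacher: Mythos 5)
Your proposal is correct and follows essentially the same route as the paper: the analytic core --- converting the prime-power conditions into divisibility conditions, rewriting the resulting product- and divisor-constrained count as a scaled lattice count, and passing to the limit via Proposition~\ref{prop:dri} when $\ell<r$ and via Corollary~\ref{corr:Hrr(n)} when $\ell=r$ --- is exactly the paper's argument around formula~\eqref{eq:lambda_dis}. The only difference is bookkeeping: the paper works with the tail events $\{\lambda_{p_t}(V_k^{(n)})\geq j_{k,t}\}$, which are themselves divisibility events (and, being convergence-determining for integer-valued coordinates, suffice for the finite-dimensional limits), so the explicit inclusion--exclusion layer you introduce is not needed there.
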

\begin{proof}
Fix $m\in\mathbb{N}$, $x\geq 0$, pairwise distinct primes $p_1,\ldots,p_m\in\mathcal{P}$ and arbitrary  $j_{k,t}\in\{0,1,2,\ldots\}$ for $k=1,\ldots,r$ and $t=1,\ldots,m$. Write
\begin{align*}
&\hspace{-0.8cm}\mathbb{P}\{V_{1}^{(n)}V_{2}^{(n)}\cdots V_{r}^{(n)}\leq x n^{r/\ell},\lambda_{p_t}(V_k^{(n)})\geq j_{k,t}\text{ for all }k=1,\ldots,r\text{ and }t=1,\ldots,m\}\\
&=\mathbb{P}\{V_{1}^{(n)}V_{2}^{(n)}\cdots V_{r}^{(n)}\leq x n^{r/\ell},p_t^{j_{k,t}}\text{ divides }V_k^{(n)}\text{ for all }k=1,\ldots,r\text{ and }t=1,\ldots,m\}\\
&=\frac{1}{\vert H_{\ell,r}(n)\vert }\sum_{i_1=1}^{\infty}\cdots\sum_{i_r=1}^{\infty}\1\{P_\ell(i_1,\ldots,i_r)\leq n,i_1\cdots i_r\leq xn^{r/\ell},\\
&\hspace{4.5cm}p_t^{j_{k,t}}\text{ divides } i_k \text{ for all }k=1,\ldots,r\text{ and }t=1,\ldots,m\}\\
&=\frac{1}{\vert H_{\ell,r}(n)\vert }\sum_{i_1=1}^{\infty}\cdots\sum_{i_r=1}^{\infty}\1\Big\{P_\ell(i_1,\ldots,i_r)\leq n,i_1\cdots i_r\leq xn^{r/\ell},\\
&\hspace{4.5cm}\prod_{t=1}^{m} p_t^{j_{k,t}}\text{ divides } i_k \text{ for all }k=1,\ldots,r\Big\}.
\end{align*}
For notational simplicity, 
put $\mu_k:=\prod_{t=1}^{m} p_t^{j_{k,t}}$. Since the sum over $i_k$ in the formula above is 
actually
taken over multiples of $\mu_k$, $k=1,\ldots,r$, we obtain
\begin{multline}\label{eq:lambda_dis}
\mathbb{P}\{V_{1}^{(n)}V_{2}^{(n)}\cdots V_{r}^{(n)}\leq x n^{\ell/r},\lambda_{p_t}(V_k^{(n)})\geq j_{k,t}\text{ for all }k=1,\ldots,r\text{ and }t=1,\ldots,m\}\\
=\frac{\vert \{(i_1,\ldots,i_r)\in\N^r:P_\ell(\mu_1 i_1,\ldots,\mu_r i_r)\leq n,(\mu_1i_1)\cdots(\mu_r i_r)\leq xn^{r/\ell}\}\vert }{\vert H_{\ell,r}(n)\vert }.
\end{multline}
If $\ell=r$, the last quantity converges as $n\to\infty$ to $x/(\mu_1\cdots\mu_r)$, by Corollary~\ref{corr:Hrr(n)}. If $\ell<r$, it converges to
\begin{equation*}
   \frac{1}{\mathcal{V}_{\ell,r}}\int_0^\infty\cdots\int_0^\infty \1_{\{P_\ell(\mu_1 y_1,\ldots,\mu_r y_r)\leq 1,\; (\mu_1 y_1)\cdots (\mu_r y_r)\leq x\}}{\rm d}y_1\cdots{\rm d}y_r
=\frac{1}{\mu_1\cdots\mu_r}\mathbb{P}\{U_{\ell,r}\leq x\},
\end{equation*}
by Proposition~\ref{prop:dri}. This finishes the proof, because
\begin{equation*}
   \frac{1}{\mu_1\cdots\mu_r}=\mathbb{P}\{\mathcal{G}_k(p_t) \geq j_{k,t}\text{ for all }k=1,\ldots,r\text{ and }t=1,\ldots,m\}.\qedhere
\end{equation*}
\end{proof}

\subsection{Proof of Theorem~\ref{thm:main1}}
We start by noting that 
the infinite product on the right-hand side of \eqref{thm:main1_claim} converges almost surely (a.s.) and in mean. For $r=2$, a
proof can 
be found in formula (6.8) of \cite{AlsKabMar:2019}, 
see also \cite{Diaconis+Erdos:2004}. Since the infinite product is nonincreasing in $r$ a.s., it must also converge for all $r\geq 3$.

We shall use a representation 
\begin{equation*}
   \GCD(V_1^{(n)},V_2^{(n)},\ldots,V_r^{(n)}))=\prod_{p\in\mathcal{P}}p^{\min_{k=1,\ldots,r}\lambda_p(V_k^{(n)})}=\left(\prod_{p\in\mathcal{P},p\leq M}\cdots\right)\left(\prod_{p\in\mathcal{P},p > M}\cdots\right),
\end{equation*}
where $M>0$ is a fixed large number. As $n\to\infty$, the first product converges in distribution to
\begin{equation*}
   \prod_{p\in\mathcal{P},p\leq M}p^{\min_{k=1,\ldots,r}\mathcal{G}_k(p)},
\end{equation*}
which, in its turn,  
is a.s.\ converging, as $M\to\infty$, to the right-hand side of \eqref{thm:main1_claim}. According to Theorem~3.2 in \cite{Billingsley:1999}, it remains to check that
\begin{equation*}
\lim_{M\to\infty}\limsup_{n\to\infty}\mathbb{P}\left\{\prod_{p\in\mathcal{P},p > M}p^{\min_{k=1,\ldots,r}\lambda_p(V_k^{(n)})}\neq 1\right\}=0,
\end{equation*}
which is equivalent to
\begin{equation}\label{eq:bill12}
\lim_{M\to\infty}\limsup_{n\to\infty}\mathbb{P}\left\{\text{ for some }p\in\mathcal{P},p>M, \min_{k=1,\ldots,r}\lambda_p(V_k^{(n)})>0\right\}=0.
\end{equation}
Using Boole's inequality and formula \eqref{eq:lambda_dis} we write
\begin{align*}
&\hspace{-2cm}\mathbb{P}\left\{\text{ for some }p\in\mathcal{P},p>M, \min_{k=1,\ldots,r}\lambda_p(V_k^{(n)})>0\right\}\\
&\leq\sum_{p\in\mathcal{P},p>M}\mathbb{P}\{\lambda_p(V_1^{(n)})\geq 1,\ldots,\lambda_p(V_r^{(n)})\geq 1\}\\
&=\sum_{p\in\mathcal{P},p>M}\frac{\vert \{(i_1,\ldots,i_r)\in\N^r: P_\ell(p i_1,\ldots, p i_r)\leq n\}\vert }{\vert H_{\ell,r}(n)\vert }\\
&=\sum_{p\in\mathcal{P},p>M}\frac{\vert \{(i_1,\ldots,i_r)\in\N^r: P_\ell(i_1,\ldots, i_r)\leq n/p^\ell\}\vert }{\vert H_{\ell,r}(n)\vert }\\
&=\sum_{p\in\mathcal{P},p>M}\frac{\vert H_{\ell,r}(\lfloor n/p^\ell\rfloor)\vert }{\vert H_{\ell,r}(n)\vert }.
\end{align*}
Invoking 
Corollaries~\ref{corr:Hrr(n)} and \ref{cor:counting_function_reg_var} in conjunction with Potter's bound for regularly varying functions 
(Theorem~1.5.6 in \cite{BGT}), we infer, 
for $n\in\N$ large enough,
\begin{equation*}
   \frac{\vert H_{\ell,r}(\lfloor n/p^\ell\rfloor)\vert }{\vert H_{\ell,r}(n)\vert}\leq \frac{2}{(p^{\ell})^{(2r-1)/(2\ell)}}=\frac{2}{p^{r-1/2}}\leq \frac{2}{p^{3/2}}.
\end{equation*}
This yields \eqref{eq:bill12}, because
\begin{equation*}
   \lim_{M\to\infty}\sum_{p\in\mathcal{P},p>M}\frac{2}{p^{3/2}}=0.
\end{equation*}

\subsection{Proof of Theorem~\ref{thm:main2} and Corollary~\ref{corr:LCM_moments}}
Similarly to the proof of Theorem~\ref{thm:main1}, we start with a
decomposition
\begin{multline*}
\frac{\LCM(V_1^{(n)},V_2^{(n)},\ldots,V_r^{(n)})}{V_1^{(n)}V_2^{(n)}\cdots V_r^{(n)}}=\prod_{p\in\mathcal{P}}p^{\max_{k=1,\ldots,r}\lambda_p(V_k^{(n)})-\sum_{k=1}^{r}\lambda_p(V_k^{(n)})}\\
=\left(\prod_{p\in\mathcal{P},p\leq M}\cdots\right)\left(\prod_{p\in\mathcal{P},p > M}\cdots\right),
\end{multline*}
where $M$ is a fixed large integer. As $n\to\infty$, the first product converges to
\begin{equation*}
   \prod_{p\in\mathcal{P},p\leq M}p^{\max_{k=1,\ldots,r}\mathcal{G}_k(p)-\sum_{k=1}^{r}\mathcal{G}_k(p)}
\end{equation*}
by virtue of Proposition~\ref{prop:convergence_to_geometrics}. As $M\to\infty$, the latter converges a.s.~to
\begin{equation*}
   \prod_{p\in\mathcal{P}}p^{\max_{k=1,\ldots,r}\mathcal{G}_k(p)-\sum_{k=1}^{r}\mathcal{G}_k(p)},
\end{equation*}
which is an a.s.~finite random variable, see Proposition 2.1 in \cite{Bostan+Marynych+Raschel:2019}.

Appealing once again to Theorem~3.2 in \cite{Billingsley:1999}, we see that it is enough to check that
\begin{equation*}
\lim_{M\to\infty}\limsup_{n\to\infty}\mathbb{P}\left\{\prod_{p\in\mathcal{P},p > M}p^{\max_{k=1,\ldots,r}\lambda_p(V_k^{(n)})-\sum_{k=1}^{r}\lambda_p(V_k^{(n)})}\neq 1\right\}=0,
\end{equation*}
which is equivalent to
\begin{equation}\label{eq:bill123}
\lim_{M\to\infty}\limsup_{n\to\infty}\mathbb{P}\left\{\text{ for some }p\in\mathcal{P},p>M, \max_{k=1,\ldots,r}\lambda_p(V_k^{(n)})\neq \sum_{k=1}^{r}\lambda_p(V_k^{(n)})\right\}=0.
\end{equation}
Observe that
\begin{equation*}
   \left\{\max_{k=1,\ldots,r}\lambda_p(V_k^{(n)})\neq \sum_{k=1}^{r}\lambda_p(V_k^{(n)})\right\}\subset
\left\{\sum_{k=1}^{r}\lambda_p(V_k^{(n)})\geq 2\right\}.
\end{equation*}
Thus
\begin{align*}
\mathbb{P}&\left\{\text{ for some }p\in\mathcal{P},p>M, \max_{k=1,\ldots,r}\lambda_p(V_k^{(n)})\neq \sum_{k=1}^{r}\lambda_p(V_k^{(n)})\right\}\\
&\leq \sum_{p\in\mathcal{P},p>M}\mathbb{P}\left\{\sum_{k=1}^{r}\lambda_p(V_k^{(n)})\geq 2\right\}\\
&\leq \sum_{p\in\mathcal{P},p>M}\mathbb{P}\left\{\lambda_p(V_k^{(n)})\geq 2\text{ for some }k=1,\ldots,r\right\}\\
&\hspace{1cm}+\sum_{p\in\mathcal{P},p>M}\mathbb{P}\left\{\lambda_p(V_i^{(n)})\geq 1,\lambda_p(V_j^{(n)})\geq 1 \text{ for some }i,j=1,\ldots,r,\,i\neq j\right\}.
\end{align*}
Using the fact that the vector $(V_1^{(n)},V_2^{(n)},\ldots,V_r^{(n)})$ is exchangeable, that is, its distribution is invariant under permutations, and then applying 
formula~\eqref{eq:lambda_dis}, we conclude 
that
\begin{align*}
\mathbb{P}&\left\{\text{ for some }p\in\mathcal{P},p>M, \max_{k=1,\ldots,r}\lambda_p(V_k^{(n)})\neq \sum_{k=1}^{r}\lambda_p(V_k^{(n)})\right\}\\
&\leq r\sum_{p\in\mathcal{P},p>M}\mathbb{P}\left\{\lambda_p(V_1^{(n)})\geq 2\right\}+r(r-1)\sum_{p\in\mathcal{P},p>M}\mathbb{P}\left\{\lambda_p(V_1^{(n)})\geq 1,\lambda_p(V_2^{(n)})\geq 1\right\}\\
&=r\sum_{p\in\mathcal{P},p>M}\frac{\vert \{(i_1,\ldots,i_r)\in\N^r:P_\ell(p^2 i_1,i_2,\ldots,i_r)\leq n\}\vert }{\vert H_{\ell,r}(n)\vert }\\
&\hspace{1cm}+r(r-1)\sum_{p\in\mathcal{P},p>M}\frac{\vert \{(i_1,\ldots,i_r)\in\N^r:P_\ell(p i_1,p i_2,i_3,\ldots,i_r)\leq n\}\vert }{\vert H_{\ell,r}(n)\vert }.
\end{align*}
If $\ell=r$, 
the right-hand side is equal to
\begin{equation*}
   r^2\sum_{p\in\mathcal{P},p>M}\frac{\vert H_{r,r}(\lfloor n/p^2\rfloor)\vert }{\vert H_{r,r}(n)\vert }
\end{equation*}
and \eqref{eq:bill123} follows by appealing to 
Potter's bound in the same fashion as we did in the proof of Theorem~\ref{thm:main1}. If $\ell<r$, we 
apply inequality~\eqref{eq:prop1_proof0} to obtain
\begin{equation*}
   \mathbb{P}\left\{\text{ for some }p\in\mathcal{P},p>M, \max_{k=1,\ldots,r}\lambda_p(V_k^{(n)})\neq \sum_{k=1}^{r}\lambda_p(V_k^{(n)})\right\}
\leq r^2 \frac{n^{r/\ell}\mathcal{V}_{\ell,r}}{\vert H_{\ell,r}(n)\vert }\left(\sum_{p\in\mathcal{P},p>M}\frac{1}{p^2}\right).
\end{equation*}
Sending first $n\to\infty$ and using Proposition~\ref{prop:Hrl(n)}, and then 
letting $M\to\infty$ yields~\eqref{eq:bill123}. Thus, \eqref{eq:lcm_conv1} has been 
proved. The second limit relation  \eqref{eq:lcm_conv2} is justified 
by the continuous mapping theorem in combination with the joint convergence 
\begin{equation*}
   \left(\frac{V_1^{(n)}V_2^{(n)}\cdots V_r^{(n)}}{n^{r/\ell}},\frac{\LCM(V_1^{(n)},V_2^{(n)},\ldots,V_r^{(n)})}{V_1^{(n)}V_2^{(n)}\cdots V_r^{(n)}}\right)~\dodn~\left(U_{\ell,r},\prod_{p\in\mathcal{P}}
p^{\max_{k=1,\ldots,r}\mathcal{G}_k(p)-\sum_{k=1}^{r}\mathcal{G}_k(p)}\right),
\end{equation*}
which holds true, by Proposition~\ref{prop:convergence_to_geometrics}. The convergence of all power moments of positive orders follows from the fact that both variables on the left-hand side are supported by $[0,1]$.

Corollary~\ref{corr:LCM_moments} follows immediately from formula \eqref{eq:lcm_conv2} and Proposition~\ref{prop:reg_var_moments} in the Appendix, upon applying the Skorohod representation theorem, see, for instance, Theorem~4.30 in \cite{Kallenberg:2001}. The theorem 
guarantees that there exist 
versions of the 
random variables on the left-hand side of \eqref{eq:lcm_conv2}, 
which converge almost surely to a version 
of the limit random variable in \eqref{eq:lcm_conv2}. 

\appendix
\section{Two convergence results}
\label{sec:app}

First, we state a result concerning 
multivariate infinite Riemann sums. 
\begin{proposition}\label{prop:dri}
Let $r\in\mathbb{N}$ and $g:\mathbb{R}^r_{\geq 0}\to \mathbb{R}_{\geq 0}$ be a coordinatewise nonincreasing function. Assume that
\begin{equation*}
   I:=\int_0^\infty\cdots\int_0^\infty g(y_1,y_2,\ldots,y_r){\rm d}y_{1}\cdots{\rm d}y_{r}<\infty.
\end{equation*}
Then
\begin{equation*}
   \lim_{n\to\infty }\frac{1}{n^{r}}\sum_{i_1=1}^{\infty}\cdots\sum_{i_r=1}^{\infty}g\left(\frac{i_1}{n},\frac{i_2}{n},\ldots,\frac{i_r}{n}\right)=I.
\end{equation*}
\end{proposition}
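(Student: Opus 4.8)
The plan is to sandwich the infinite Riemann sum between two $r$-fold integrals of $g$ by exploiting the coordinatewise monotonicity on a grid of cubes of side $1/n$, and then to let $n\to\infty$. Write $S_n := n^{-r}\sum_{i_1=1}^\infty\cdots\sum_{i_r=1}^\infty g(i_1/n,\ldots,i_r/n)$ for the quantity of interest. To each multi-index $\mathbf{i}=(i_1,\ldots,i_r)\in\N^r$ I would attach the two translated cubes $Q_{\mathbf{i}}^- := \prod_{j=1}^r[(i_j-1)/n,\,i_j/n]$ and $Q_{\mathbf{i}}^+ := \prod_{j=1}^r[i_j/n,\,(i_j+1)/n]$, each of volume $n^{-r}$. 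The crucial structural fact is that, because $g$ is coordinatewise nonincreasing, the sampled value $g(i_1/n,\ldots,i_r/n)$ is a lower bound for $g$ on $Q_{\mathbf{i}}^-$ (the corner of largest coordinates) and an upper bound for $g$ on $Q_{\mathbf{i}}^+$ (the corner of smallest coordinates).

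For the upper estimate on $S_n$, the cubes $(Q_{\mathbf{i}}^-)_{\mathbf{i}\in\N^r}$ tile $[0,\infty)^r$, so integrating the inequality $g\geq g(i_1/n,\ldots,i_r/n)$ over each $Q_{\mathbf{i}}^-$ and summing gives
\begin{equation*}
   S_n = \sum_{i_1\geq 1}\cdots\sum_{i_r\geq 1}\frac{1}{n^r}g\Bigl(\frac{i_1}{n},\ldots,\frac{i_r}{n}\Bigr)\leq \int_{[0,\infty)^r} g = I.
\end{equation*}
For the lower estimate, the cubes $(Q_{\mathbf{i}}^+)_{\mathbf{i}\in\N^r}$ tile $[1/n,\infty)^r$, and integrating $g\leq g(i_1/n,\ldots,i_r/n)$ over each $Q_{\mathbf{i}}^+$ yields
\begin{equation*}
   \int_{[1/n,\infty)^r} g \leq S_n.
\end{equation*}
Hence $\int_{[1/n,\infty)^r} g \leq S_n \leq I$ for every $n$.

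It then remains to send $n\to\infty$ in this sandwich. Since $[1/n,\infty)^r$ increases to $(0,\infty)^r$ and the excluded boundary $\{y\in[0,\infty)^r : y_j=0 \text{ for some } j\}$ is a finite union of hyperplanes of Lebesgue measure zero, monotone convergence (valid as $g\geq 0$) gives $\int_{[1/n,\infty)^r} g \to \int_{(0,\infty)^r} g = I$, forcing $S_n\to I$. The only delicate point — and the reason I would work with the shrunken domain $[1/n,\infty)^r$ rather than with a naive one-dimensional error term such as $g(0,\ldots,0)/n$ — is that integrability of $g$ does not preclude a singularity at the origin, so $g$ may be unbounded near the boundary of the positive orthant. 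Comparing with integrals over expanding subdomains sidesteps any pointwise evaluation of $g$ at such singular points, so the argument carries through with integrability and coordinatewise monotonicity as the only hypotheses.
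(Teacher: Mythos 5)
Your proof is correct and follows essentially the same route as the paper: both sandwich the Riemann sum between $\int_{[1/n,\infty)^r} g$ and $I$ using coordinatewise monotonicity, and then let $n\to\infty$. Your explicit cube-tiling argument merely spells out the paper's ``by monotonicity'' step, and your use of monotone convergence in place of the paper's dominated convergence for the final limit is an immaterial difference.
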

\begin{proof}
Put
\begin{equation*}
   I_n:=\int_{1/n}^\infty\cdots\int_{1/n}^\infty g(y_1,y_2,\ldots,y_r){\rm d}y_{1}\cdots{\rm d}y_{r}
\end{equation*}
and note that, by monotonicity,
\begin{equation*}
   I_n\leq \frac{1}{n^{r}}\sum_{i_1=1}^{\infty}\cdots\sum_{i_r=1}^{\infty}g\left(\frac{i_1}{n},\frac{i_2}{n},\ldots,\frac{i_r}{n}\right)\leq I.
\end{equation*}
By the dominated convergence theorem,
\begin{equation*}
   0\leq I-I_n = \int_{0}^\infty\cdots\int_{0}^\infty g(y_1,y_2,\ldots,y_r)\1_{\{\min(y_1,y_2,\ldots,y_r)\leq n^{-1}\}}{\rm d}y_{1}\cdots{\rm d}y_{r}\to 0,\quad n\to\infty.\qedhere
\end{equation*}
\end{proof}

Proposition~\ref{prop:reg_var_moments} 
is used in the proof of the moment convergence 
in Theorem~\ref{corr:LCM_moments}. Even though the result 
looks rather standard, 
we have not been able to locate it in the literature.
\begin{proposition}\label{prop:reg_var_moments}
Assume that $X$ is a random variable with $\mathbb{P}\{X=0\}<1$, and $(X_n)_{n\in\N}$ is a sequence of random variables on some probability space $(\Omega,\mathcal{F},\mathbb{P})$ such that, $\mathbb{P}$-a.s.,
\begin{equation*}
   \frac{X_n}{a_n}\to X
\quad\text{ as \quad }n\to\infty,\quad\text{and}\quad 0\leq \frac{X_n}{a_n}\leq C\quad \text{for some 
constant } C>0,
\end{equation*}
where 
$a_n\to\infty$. Let $f:\mathbb{R}_{\geq 0}\to\mathbb{R}$ be a locally bounded function which varies regularly at $\infty$ of index 
$\beta>0$. Then, as $n\to\infty$,
\begin{equation*}
   \E f(X_n)~\sim~(\E X^{\beta})f(a_n).
\end{equation*}
\end{proposition}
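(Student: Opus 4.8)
The plan is to prove $\E f(X_n) \sim (\E X^\beta) f(a_n)$ by passing to the scaled variables $Y_n := X_n/a_n$, which satisfy $0 \leq Y_n \leq C$ almost surely and $Y_n \to X$ almost surely. The statement can then be rewritten as
\begin{equation*}
   \E \frac{f(a_n Y_n)}{f(a_n)} \to \E X^\beta,\quad n\to\infty.
\end{equation*}
The natural tool is the dominated convergence theorem applied to the sequence of random functions $\omega \mapsto f(a_n Y_n(\omega))/f(a_n)$. To invoke it, I would establish two things: the pointwise (almost sure) convergence of the integrand to $X^\beta$, and a uniform integrable dominating bound.

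For the almost sure convergence, I would use the uniform convergence theorem for regularly varying functions (Theorem~1.5.2 in \cite{BGT}), which guarantees that $f(\lambda t)/f(t) \to \lambda^\beta$ \emph{uniformly} in $\lambda$ on compact subsets of $(0,\infty)$. On the event $\{X > 0\}$, since $Y_n \to X$ with $X$ bounded away from $0$ and below $C$, uniform convergence gives $f(a_n Y_n)/f(a_n) \to X^\beta$. On the event $\{X = 0\}$ one needs more care because $\lambda = 0$ is not in the interior of the domain of uniform convergence; here I would argue that $X^\beta = 0$ and that the contribution from small values of $Y_n$ is negligible, handled together with the domination step.

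For the dominating bound, I would apply \textbf{Potter's bound} (Theorem~1.5.6 in \cite{BGT}): for any $\delta>0$ there is a constant $A>1$ such that $f(a_n y)/f(a_n) \leq A\max(y^{\beta+\delta}, y^{\beta-\delta})$ for all $y$ with $a_n y$ and $a_n$ large. Since $0 \leq Y_n \leq C$, choosing $\delta < \beta$ yields a bound of the form $A\,C^{\beta+\delta}$ on the range where $Y_n$ is bounded away from zero, while the local boundedness hypothesis on $f$ controls the regime where $a_n Y_n$ stays bounded. These combine to a deterministic constant dominating the integrand uniformly in $n$, so the constant is trivially integrable on the probability space.

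The main obstacle I anticipate is the treatment near $Y_n = 0$, that is, reconciling the behaviour of $f$ near the origin (where only local boundedness, not regular variation, is assumed) with the scaling limit. The clean way around this is to split according to whether $a_n Y_n$ exceeds a fixed threshold $T$ or not: on $\{a_n Y_n > T\}$ Potter's bound applies with $\lambda = Y_n$ bounded below by $T/a_n$, and on $\{a_n Y_n \leq T\}$ local boundedness gives $|f(a_n Y_n)| \leq \sup_{[0,T]}|f|$, whose contribution after division by $f(a_n) \to \infty$ vanishes in the limit. The hypothesis $\mathbb{P}\{X = 0\} < 1$ ensures $\E X^\beta > 0$, so the asymptotic equivalence is non-degenerate.
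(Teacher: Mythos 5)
Your proof is correct, and it follows a genuinely different route from the paper's. The paper never dominates $f$ directly: it first replaces $f$ by a nondecreasing $g$ with $g(x)\sim f(x)$ as $x\to\infty$ (Theorem~1.5.3 in \cite{BGT}), so that on the awkward event $\{X_n/a_n\leq\varepsilon\}$ monotonicity gives $g(X_n)/g(a_n)\leq g(\varepsilon a_n)/g(a_n)\to\varepsilon^{\beta}$, while the uniform convergence theorem handles $\{X_n/a_n>\varepsilon\}$; the same monotonicity yields the uniform bound $g(X_n)/g(a_n)\leq g(Ca_n)/g(a_n)$, bounded convergence gives $\E g(X_n)\sim(\E X^{\beta})g(a_n)$, and a final sandwich argument (relation \eqref{eq:reg_var_moment_proof1}), again using local boundedness, transfers the asymptotics from $g$ back to $f$. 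You instead work with $f$ itself, writing $Y_n:=X_n/a_n$ and letting Potter's bound (Theorem~1.5.6 in \cite{BGT}) with $\delta<\beta$ do double duty: on $\{a_nY_n>T\}$ it provides the (constant) dominating function, since $A\max(Y_n^{\beta+\delta},Y_n^{\beta-\delta})\leq A\max(C^{\beta+\delta},1)$, and it also forces the integrand to $0$ whenever $Y_n\to 0$, which settles the event $\{X=0\}$; local boundedness together with $f(a_n)\to\infty$ disposes of $\{a_nY_n\leq T\}$, the uniform convergence theorem on compacts of $(0,\infty)$ gives the a.s.\ limit $X^{\beta}$ on $\{X>0\}$, and dominated convergence concludes. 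What your route buys is economy at the end: Potter's bound replaces both the monotone-rearrangement lemma and the somewhat fiddly transfer step $\E g(X_n)/\E f(X_n)\to 1$; what the paper's route buys is that it avoids Potter and the threshold bookkeeping altogether, monotonicity making the small-$Y_n$ region trivial. One streamlining you might note: since $\beta>0$ and $f$ is locally bounded, Theorem~1.5.2 in \cite{BGT} in fact gives $f(\lambda a_n)/f(a_n)\to\lambda^{\beta}$ uniformly in $\lambda\in(0,C]$, not merely on compact subsets of $(0,\infty)$, which would handle the event $\{X=0\}$ and the domination in one stroke and let you dispense with Potter's bound entirely.
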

\begin{proof}
By Theorem~1.5.3 in \cite{BGT}, there exists a nondecreasing function $g$ such that $g(x)\sim f(x)$, as $x\to\infty$. Fix $\varepsilon>0$ and write
\begin{equation*}
   \frac{g(X_n)}{g(a_n)}=\frac{g((X_n/a_n)a_n)}{g(a_n)}=\frac{g((X_n/a_n)a_n)}{g(a_n)}\1_{\{X_n/a_n>\varepsilon\}}+\frac{g((X_n/a_n)a_n)}{g(a_n)}\1_{\{X_n/a_n\leq \varepsilon\}}=:I_n(\varepsilon)+J_n(\varepsilon).
\end{equation*}
By the uniform convergence theorem for regularly varying functions (Theorem~1.5.2 in \cite{BGT}),
\begin{equation*}
\lim_{n\to\infty}I_n(\varepsilon)=X^{\beta}\1_{\{X>\varepsilon\}}\quad \mathbb P-\text{a.s.}
\end{equation*}
By monotonicity,
\begin{equation*}
   \limsup_{n\to\infty}J_n(\varepsilon)\leq \varepsilon^{\beta}\quad \mathbb P-\text{a.s.}
\end{equation*}
and thereupon
\begin{equation*}
   \limsup_{n\to\infty}\frac{g(X_n)}{g(a_n)}\leq X
   ^{\beta}\1_{\{X
   >\varepsilon\}}+\varepsilon^{\beta}\quad \mathbb P-\text{a.s.}
\end{equation*}
Hence, 
\begin{equation*}
   \limsup_{n\to\infty}\frac{g(X_n)}{g(a_n)}\leq X^{\beta}\quad \mathbb P-\text{a.s.}
\end{equation*}
The converse inequality for the $\liminf$ is a consequence of
\begin{equation*}
   \frac{g(X_n)}{g(a_n)}\geq \frac{g(X_n)}{g(a_n)}\1_{\{X_n/a_n>\varepsilon\}}\to X^{\beta}\1_{\{X
   >\varepsilon\}},\quad n\to\infty\quad \mathbb P-\text{a.s.} 
\end{equation*}
Thus,
\begin{equation*}
\lim_{n\to\infty}\frac{g(X_n)}{g(a_n)}=X^\beta\quad \mathbb{P}-\text{a.s.}  
\end{equation*}
By monotonicity and regular variation of $g$ in conjunction with the assumption $X_n/a_n\leq C$, the left-hand side is bounded, which entails
\begin{equation*}
\lim_{n\to\infty}\frac{\E g(X_n)}{g(a_n)}= \E X^{\beta}. 
\end{equation*}
Further, 
\begin{equation*}
\lim_{n\to\infty}\frac{\E g(X_n)}{f(a_n)}= \E X^{\beta}.
\end{equation*}
It remains to note that
\begin{equation}\label{eq:reg_var_moment_proof1}
\lim_{n\to\infty}\frac{\E g(X_n)}{\E f(X_n)}=1.
\end{equation}
Indeed, given
$\varepsilon>0$, there exists $x_0>0$ such that $(1-\varepsilon)f(x)\leq g(x)\leq (1+\varepsilon)f(x)$, for all $x\geq x_0$. Thus,
\begin{equation*}
   (1-\varepsilon)\E f(X_n)-(1-\varepsilon)\sup_{x\in [0,\,x_0]}f(x)\leq (1-\varepsilon)\E f(X_n)\1_{\{X_n>x_0\}} \leq \E g(X_n)\leq (1+\varepsilon)\E f(X_n)+g(x_0),
\end{equation*}
and \eqref{eq:reg_var_moment_proof1} follows.
\end{proof}

\end{document}